\newtheorem{thm}{Theorem}[section]
\newtheorem{lem}{Lemma}[section]
\newtheorem{cor}{Corollary}[section]
\newtheorem{prop}{Proposition}[section]
\theoremstyle{remark}
\newtheorem{rmk}{Remark}[section]
\numberwithin{equation}{section}
\theoremstyle{definition}
\newcommand{\C}{\ensuremath{\mathbb{C}}}
\newcommand{\R}{\ensuremath{\mathbb{R}}}
\newcommand{\CP}{\ensuremath{\mathbb{CP}}}
\newcommand{\RP}{\ensuremath{\mathbb{RP}}}
\newcommand{\rS}{\ensuremath{\mathbb{S}}}
\newcommand{\na}{\nabla}
\newcommand{\la}{\langle}
\newcommand{\ra}{\rangle}
\newcommand{\pa}{\partial}
\newcommand{\vphi}{\varphi}
\newcommand{\ka}{K\"ahler\ }
\newcommand{\tr}{\text{tr}}
\newcommand{\Vol}{\text{Vol}}
\newcommand{\trc}{\mathring{Rc}}
\newcommand{\obar}{\overline}
\newcommand{\KN}{\mathbin{\bigcirc\mspace{-15mu}\wedge\mspace{3mu}}}
\begin{document}

\title[4D gradient shrinking Ricci solitons with half PIC] {Four-dimensional complete gradient shrinking Ricci solitons with half positive isotropic curvature}
\author[Huai-Dong Cao And Junming Xie]{Huai-Dong Cao$^{\dag }$ And Junming Xie}

\address{Department of Mathematics, Lehigh University, Bethlehem, PA 18015}
\email{huc2@lehigh.edu; jux216@lehigh.edu}
\thanks{$^{\dag }$Research partially supported  by Simons Foundation Collaboration Grant \#586694}

\begin{abstract}
	In this paper, we investigate the geometry of $4$-dimensional complete gradient shrinking Ricci solitons with half positive isotropic curvature (half PIC) or half nonnegative isotropic curvature. Our first main result is a certain form of curvature estimates for such Ricci shrinkers, including a quadratic curvature lower bound estimate for noncompact ones with half PIC. As a consequence, we obtain a new and more direct proof of the classification result, first observed by Li-Ni-Wang \cite{Li-Ni-Wang:18}, for gradient shrinking K\"ahler-Ricci solitons of complex dimension two with nonnegative isotropic curvature. Moreover, based on a strong maximum principle argument, we classify 4-dimensional complete gradient shrinking Ricci solitons with half nonnegative isotropic curvature (except the half PIC case). Finally, we treat the half PIC case under an additional assumption on the Ricci tensor.
\end{abstract}
\maketitle


\section{Introduction}

A complete Riemannian manifold $(M^n,g)$ is called a {\em gradient shrinking Ricci soliton} if there exists a smooth (potential) function $f$ on $M^n$ such that the Ricci tensor $Rc$ of the metric $g$ satisfies the equation
\begin{equation} \label{eq:Riccishrinker}
	Rc+ \na^2f = \frac 1 2 g,
\end{equation}
where $\na^2 f$ denotes the Hessian of the potential function $f$. We usually normalize $f$, up to an additive constant, so that 
\begin{equation}  \label{eq: normalized f}
R+|\nabla f|^2=f, 
\end{equation}
where $R$ is the scalar curvature.

The subject of Ricci solitons was introduced by Hamilton \cite{Ha:88, Ha3:93} to study the formation of singularities in the Ricci flow. Ricci solitons are a natural extension of Einstein manifolds, and also self-similar solutions to Hamilton's Ricci flow arising as singularity models of the flow.  In particular, it follows from the work of Naber \cite{Naber:10} and Enders-M\"uller-Topping \cite{EMT:11} that rescaling limit singularity models of Type I maximal solutions on any compact manifold are necessarily nontrivial gradient shrinking Ricci solitons (see also the work of X. Cao and Q. S. Zhang \cite{C-Zq11}).

By the work of Hamilton \cite{Ha:88, Ha3:93}, it is known that any 2-dimensional complete gradient shrinking Ricci soliton is isometric to either $\rS^2$ or $\RP^2$, or the Gaussian soliton $\R^2$. Moreover, 3-dimensional gradient shrinking Ricci solitons have been completely classified through the works of Ivey \cite{Ivey:93}, Hamilton \cite{Ha3:93}, Perelman \cite{Perelman:03}, Naber \cite{Naber:10}, Ni-Wallach \cite{Ni-Wallach:08}, and Cao-Chen-Zhu \cite{Cao-C-Z:10} that they are isometric either to a finite quotient of $\rS^3$ or $\rS^2\times \R$, or to the Gaussian soliton $\R^3$.

While the classification of general gradient shrinking Ricci solitons in dimension four (or higher) is much more difficult and remains largely open, there has been a lot of progress on the classification of 4-dimensional gradient shrinking Ricci solitons with nonnegative curvature or special geometry. For example, the classification of compact shrinking Ricci solitons with positive curvature operator $Rm >0$ (or $Rm \ge 0$) follows from the well-known work of Hamilton \cite{Ha:86}. Furthermore, the same classification is valid under the weaker curvature assumption of 2-positive curvature operator\footnote{The classifications in dimension $n\geq 5$ follow from the work of B\"ohm-Wilking \cite{BW08}.} 
due to the work of H. Chen \cite{Chen 91}.  In the noncompact case, Naber \cite{Naber:10} classified $4$-dimensional complete noncompact gradient shrinking Ricci solitons with bounded and nonnegative curvature operator, $0\leq Rm\le C$. More recently, Munteanu and Wang \cite{M-W:17} further removed the bounded curvature condition and classified complete gradient Ricci shrinkers with $Rm\geq 0$ (for $n\geq 4$). 
For other classification results in dimension $n=4$ under various special curvature assumptions, such as locally conformally flat, half-conformally flat, harmonic Weyl, half harmonic Weyl, Bach-flat, of constant scalar curvature, etc., see, e.g.,  \cite{ENM:08, Ni-Wallach:08, Zhang2:09, Petersen-W:10, Cao-W-Z:11,  LR:11, M-S:13, Chen-Wang:15, Cao-Chen:13, WWW:18, LR:16, Cheng-Zhou 21} and the references therein. 

\smallskip
In this paper, we focus our attention on $4$-dimensional gradient shrinking Ricci solitons with half positive isotropic curvature.  Recall that a general Riemannian manifold $(M^n,g)$ of dimension $n\ge 4$ is said to have {\it positive isotropic curvature} if the Riemann curvature tensor $Rm=\{R_{ijkl}\}$ has the following property, 
\begin{equation} \label{eq:PIC}
R_{1313}+R_{1414}+R_{2323}+R_{2424}-2R_{1234}>0
\end{equation}
for any orthonormal 4-frame $\{e_1,e_2,e_3,e_4\}$.
The notion of positive isotropic curvature (PIC) was introduced by Micallef-Moore \cite{Micallef-M:88}, in which they proved that any compact simply connected $n$-dimensional Riemannian manifold with PIC is homeomorphic to a round sphere. Subsequently,  Micalleff-Wang \cite{MW:93} showed that the PIC condition is preserved under connected sums. Moreover, they  proved that the second Betti number of even dimensional compact, locally irreducible, manifolds with nonnegative isotropic curvature is at most one.  

For any oriented Riemannian 4-manifold $(M^4,g)$, it is well-known that the space of 2-forms $\wedge^2(M)$ admits the orthogonal decomposition 
$$\wedge^2(M) = \wedge^{+}(M) \oplus \wedge^{-}(M) $$
into the eigenspaces of the Hodge star operator $\star : \wedge^2(M) \to \wedge^2(M)$  of eigenvalues $\pm 1$. Smooth sections of $\wedge^+ (M)$ and $\wedge^-(M)$ are called {\it self-dual} and {\it anti-self-dual} 2-forms, respectively.  Accordingly, the Riemann curvature operator 
$$ Rm : \wedge^2(M) \to \wedge^2(M),$$
considered as a self-adjoint linear map, admits a block decomposition into  four pieces, 
\begin{equation} \label{eq:Rmdecomp}
	Rm = 
	\begin{pmatrix}
		A & B \\
		B^t & C
	\end{pmatrix}
	=
	\begin{pmatrix}
		\frac{R}{12} + W^+ & \mathring{Rc} \\
		\mathring{Rc} & \frac{R}{12} + W^-
	\end{pmatrix},
\end{equation}
where $W^{\pm}$ denote the self-dual and anti-self-dual part of the Weyl tensor, and $\mathring{Rc}$ is the traceless Ricci part.  It turns out (see \cite{Ha:97} or Section 2.2 below)  that $(M^4,g)$ has PIC  if and only if the $3\times 3$ matrices $A$ and $C$ in (\ref{eq:Rmdecomp}) are both {\it 2-positive} (i.e., the sum of the two least eigenvalues is positive).  
In \cite{Ha:97}, Hamilton showed that the PIC condition is preserved under the Ricci flow in dimension four\footnote{Later, this was proved in all dimensions $n\geq 5$ by Brendle-Schoen \cite{Brendle-S:09} and Nguyen \cite{Nguyen:10} independently, and this property played an essential role in Brendle-Schoen's proof of the long standing $1/4$-pinching differentiable sphere theorem.} and initiated the investigation of 4-dimensional Ricci flow with surgery under the PIC assumption; see also the work of Chen-Zhu \cite{Chen-Zhu:06}. Subsequently,  by using the Ricci flow with surgery developed in \cite{Ha:97, Chen-Zhu:06}, Chen-Tang-Zhu \cite{Chen-T-Zhu:12} completely classified compact $4$-manifolds with PIC up to diffeomorphisms.

For Einstein manifolds with PIC, Brendle \cite{Brendle:10} proved that they must be a finite quotient of the unit (round) sphere $\rS^n$, up to scaling.  Also, he proved that Einstein manifolds with nonnegative isotropic curvature are locally symmetric. On the other hand, for Ricci solitons, it was proved recently by Li-Ni-Wang \cite{Li-Ni-Wang:18} that any 4-dimensional complete gradient shrinking Ricci soliton with PIC is a finite quotient of either $\rS^4$ or $\rS^3\times \R$. Moreover, in dimension $n=4$, Richard-Seshadri \cite{Richard-Seshadri:16} extended Brendle's result by showing that a compact oriented Einstein 4-manifold with half PIC is isometric to $\rS^4$ or $\CP^2$. 

By definition, an oriented 4-manifold $(M^4, g)$ is said to have {\it half positive isotropic curvature} (half PIC) if either the matrix $A$ or the matrix $C$ in \eqref{eq:Rmdecomp} is 2-positive. Similarly, $(M^4, g)$ has {\it half nonnegative isotropic curvature} if either $A$ or $C$ is weakly 2-positive (i.e., 2-nonnegative).  

In this paper, we shall investigate the geometry of four-dimensional gradient shrinking Ricci solitons with half PIC or half nonnegative isotropic curvature. Throughout the paper, we shall assume the 4-manifold $M^4$ is oriented. Our first main result is the following curvature lower bound estimates. 

\begin{thm} \label{thm:quadraticofC1}
	Let $(M^4, g, f)$ be a 4-dimensional complete gradient shrinking Ricci soliton. 

\smallskip
\begin{enumerate}
		\item[(a)] If $(M^4, g, f)$ has half nonnegative isotropic curvature, then either the matrix $A$ is nonnegative (definite) or the matrix $C$ is nonnegative.\footnote{We obtained this result in Spring 2018. In \cite{Cho-Li:20}, Cho and Li proved the same result, namely Theorem \ref{thm:quadraticofC1} \!(a), independently.}

\smallskip
               \item[(b)] If $(M^4, g, f)$ has half positive isotropic curvature, then either $A>0$  or $C>0$. Moreover, if $M^4$ is noncompact then there exists some constant $K>0$ such that the smallest eigenvalue $A_1$ of $A$, or $C_1$ of $C$, satisfies the estimate 
$$ A_1 \geq \frac{K}{f} , \qquad 
{\mbox{or}} \ \ C_1 \geq \frac{K}{f}.$$
\end{enumerate}
 \end{thm}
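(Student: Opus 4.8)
The central tool is the elliptic equation the curvature operator satisfies on a shrinker. Writing $\Delta_f:=\Delta-\la\na f,\na\,\cdot\,\ra$ for the drift Laplacian, self-similarity converts Hamilton's reaction--diffusion evolution $\pa_t\Rm=\Delta\Rm+\Rm^2+\Rm^\#$ into
\[
\Delta_f\Rm=\Rm-\Rm^2-\Rm^\#,
\]
which in the block form \eqref{eq:Rmdecomp} reads $\Delta_f A=A-(A^2+BB^t+2A^\#)$ and $\Delta_f C=C-(C^2+B^tB+2C^\#)$, together with a coupled equation for $B$. Running the tensor maximum principle on the smallest eigenvalues $A_1=\min\operatorname{spec}(A)$ and $C_1=\min\operatorname{spec}(C)$ — locally Lipschitz functions — by extending a minimal eigenvector so that it is parallel at the point under consideration, discarding $(BB^t)_{11}\ge0$, and using $(A^\#)_{11}=A_2A_3$, one obtains in the barrier (support) sense
\[
\Delta_f A_1\le A_1-A_1^2-2A_2A_3,\qquad \Delta_f C_1\le C_1-C_1^2-2C_2C_3.
\]
I will also use the standard shrinker identities $\Delta f=2-R$, $|\na f|^2=f-R$ (hence $\Delta_f f=2-f$), $\na_iR=2R_{ij}\na_jf$, and $R\ge0$.

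For part (a): the hypothesis says that at every point the curvature lies in the union of the two closed convex cones $\mathcal K_A=\{A\ \text{is }2\text{-nonnegative}\}$ and $\mathcal K_C=\{C\ \text{is }2\text{-nonnegative}\}$, each of which is preserved by the Ricci flow (as in Hamilton's proof that PIC is preserved in dimension four), and since the shrinker is self-similar, a strong maximum principle and connectedness argument forces the curvature to lie globally in one of the cones, say $A$ is $2$-nonnegative on all of $M$. On the open set $\{A_1<0\}$ we then have $A_2\ge-A_1>0$, so $A_2A_3\ge A_1^2$ and the barrier inequality gives $\Delta_f A_1\le A_1-3A_1^2\le A_1$ there; hence $v:=\min(A_1,0)\le0$ is a viscosity subsolution of $\Delta_f v\le v$. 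The Omori--Yau maximum principle for $\Delta_f$ on the complete shrinker — available because $f$ is a proper exhaustion with $\Delta_f f=2-f$, after a routine localization if $|\Rm|$ is not known a priori to be bounded — shows that $v$ cannot attain, even asymptotically, a negative infimum, so $v\equiv0$, i.e.\ $A_1\ge0$ on $M$.

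For part (b): assume half PIC, and by part (a) that $A$ is $2$-positive with $A_1\ge0$ on $M$. If $A_1(p)=0$ at some $p$, then $p$ is an interior minimum of $A_1$ and strict $2$-positivity yields the contradiction $0\le\Delta_f A_1(p)\le-2A_2(p)A_3(p)<0$; hence $A_1>0$ everywhere. If $M$ is compact, then $f$ is bounded and $A_1\ge\min_M A_1>0$ gives $A_1\ge K/f$ for a suitable $K>0$. If $M$ is noncompact, suppose $\inf_M(fA_1)=0$; this infimum is not attained (it would force $A_1=0$), so there is a sequence $p_k$ with $d(p_k,p_0)\to\infty$ and $f(p_k)A_1(p_k)\to0$. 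Rescaling the metric around $p_k$ by $f(p_k)\asymp\tfrac14 d(p_k,p_0)^2$ and invoking $\kappa$-noncollapsing of gradient shrinkers together with the soliton identities, one extracts a complete pointed limit — itself a shrinking, steady, or Ricci-flat soliton — whose curvature carries a block $A_\infty\ge0$ (a limit of $2$-positive blocks) with $(A_\infty)_1=0$ at the basepoint; Hamilton's strong maximum principle then forces $(A_\infty)_1\equiv0$, so the limit splits off a line, and the resulting lower-dimensional factor cannot accommodate the half-positivity that persists along the rescaled sequence — a contradiction. Therefore $fA_1\ge K>0$ on $M$.

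The step I expect to be the main obstacle is precisely this last part of (b): passing from ``$A_1>0$'' to the \emph{sharp} quadratic decay $A_1\ge K/f$ at infinity. A direct weighted maximum principle applied to $h=fA_1$ does not close: combining $\Delta_f f=2-f$, $|\na f|^2=f-R$ and the barrier inequality for $A_1$, at an interior minimum of $h$ one obtains only $h\le\tfrac{2R}{3f}\le\tfrac23$ — an \emph{upper} bound — because the favorable term $A_2A_3\gtrsim A_1^2$ becomes negligible once $A_1$ is small. Making the argument work therefore requires either a more carefully engineered auxiliary function (built from $f$, $R$, and possibly the intermediate eigenvalue $A_2$, exploiting $\na_iR=2R_{ij}\na_jf$ and $R+|\na f|^2=f$) or a careful execution of the blow-up-and-rigidity scheme above; controlling the limiting soliton and ruling out the degenerate splittings, possibly without a priori curvature bounds, is the technical heart of the proof.
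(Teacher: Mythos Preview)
Your proposal has the right differential inequalities but diverges from the paper in both parts, and in each case there is a genuine gap.

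\textbf{Part (a).} First, a minor point: ``half NNIC'' is already a global hypothesis --- by definition either $A$ is $2$-nonnegative on all of $M$ or $C$ is --- so no ``union of cones'' or connectedness argument is needed. More seriously, your Omori--Yau step is incomplete: you never establish that $\inf_M A_1>-\infty$, and without this the weak maximum principle for $\Delta_f$ does not apply to $v=\min(A_1,0)$. The phrase ``after a routine localization if $|\Rm|$ is not known a priori to be bounded'' hides exactly the content of the proof. The paper instead exploits that a shrinker is an \emph{ancient solution} and runs B.-L.~Chen's parabolic localization: one multiplies $C_1$ by a spatial cutoff $\phi\bigl((d_t(x_0,x)+\tfrac{5(n-1)t}{3r_0})/(\alpha r_0)\bigr)$, uses Perelman's comparison $(\partial_t-\Delta)d_t\ge -\tfrac{5(n-1)}{3r_0}$, and integrates the resulting ODI $\tfrac{d^-}{dt}u_{\min}\ge\tfrac12 u_{\min}^2$ to obtain $C_1(x,t)\ge\min\{-2/(t-s),\,-K_0/(\alpha r_0)^2\}$; then $\alpha\to\infty$ followed by $s\to-\infty$ gives $C_1\ge 0$. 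The ancient time direction is what substitutes for your missing a priori lower bound.

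\textbf{Part (b).} You correctly locate the difficulty and correctly observe that the na\"ive test function $fA_1$ fails. But the blow-up scheme you sketch does not close: there is no a priori curvature bound with which to extract a smooth limit, no injectivity-radius control at $p_k$ without one, and no argument that strict half-positivity survives the limit in a form that rules out the splitting you invoke. The paper sidesteps all of this with a short elliptic comparison in the spirit of Chow--Lu--Yang. Having shown $C_1>0$, fix a large ball $B_p(r_0)$, set $a:=\inf_{\partial B_p(r_0)}C_1>0$, and define
\[
u:=C_1-af^{-1}-naf^{-2}.
\]
From $\Delta_f C_1\le C_1$ together with $\Delta_f(f^{-1})\ge f^{-1}-\tfrac{n}{2}f^{-2}$ and $\Delta_f(f^{-2})\ge\tfrac32 f^{-2}$ (both immediate from $\Delta_f f=\tfrac{n}{2}-f$ and $|\na f|^2\le f$ once $f$ is large) one gets $\Delta_f u\le u$ on $M\setminus B_p(r_0)$. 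Since $u>0$ on $\partial B_p(r_0)$ and $\liminf u\ge 0$ at infinity (because $C_1>0$ while the $f^{-1},f^{-2}$ terms vanish), any negative value of $u$ would force an interior negative minimum where $0\le\Delta_f u\le u<0$. Hence $u\ge0$, i.e.\ $C_1\ge a/f$ outside the ball. This is exactly the ``more carefully engineered auxiliary function built from $f$'' you anticipated; the second-order correction $-naf^{-2}$ absorbs the error in $\Delta_f(f^{-1})$, and no information about $R$, $B$, or the intermediate eigenvalue is needed.
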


As a consequence of Theorem \ref{thm:quadraticofC1}, we obtain the following classification result, first observed by Li-Ni-Wang \cite{Li-Ni-Wang:18} (see Corollary 3.1 in \cite{Li-Ni-Wang:18}), for gradient  K\"ahler-Ricci shrinkers of complex dimension two with nonnegative isotropic curvature.

\begin{cor} \label{thm:kahler}
	Let $(M^4,g,f)$ be a complete gradient shrinking K\"ahler-Ricci soliton of complex dimension two.

	\begin{enumerate}
		\item[(i)] If $(M^4,g,f)$ has half positive isotropic curvature, then it is, up to automorphisms, the complex projective space $\CP^2$. 

	    \smallskip	
		\item[(ii)] If $(M^4,g,f)$ has nonnegative isotropic curvature, then it is, up to automorphisms, one of the following: the complex projective space $\CP^2$, the product $\CP^1 \times \CP^1$, the cylinder $\CP^1 \times \C$, or the Gaussian soliton on $\C^2$.

	\end{enumerate}
\end{cor}

\begin{rmk}
We note that Li-Ni-Wang \cite{Li-Ni-Wang:18} used a Bony type strong maximum principle for degenerate elliptic equations from Brendle-Schoen \cite{Brendle-S:08}.
In our case, as we shall see later, it follows easily from Theorem \ref{thm:quadraticofC1} that any gradient shrinking K\"ahler-Ricci soliton of complex dimension two with nonnegative isotropic curvature must have nonnegative curvature operator $Rm\ge 0$. This leads to a new and more direct proof of Corollary \ref{thm:kahler}. 
\end{rmk}

\medskip
By Theorem \ref{thm:quadraticofC1} and a strong maximum principle argument, we also obtain

\begin{thm} \label{thm:dichotomy}
	Let $(M^4, g, f)$ be a 4-dimensional complete gradient shrinking Ricci soliton with half nonnegative isotropic curvature. Then, $(M^4, g, f)$ either has half positive isotropic curvature, or is a gradient shrinking K\"ahler-Ricci soliton, or is isometric to the Gaussian soliton $\R^4$ or a finite quotient of  $\rS^2 \times \rS^2$ or $\rS^2 \times \R^2$. 
\end{thm}

We note that  a K\"ahler surface is necessarily of half nonnegative isotropic curvature, see the special $Rm$ block decomposition formula \eqref{eq:RmforKahlersurfaces}.  It is also well-known that, besides del Pezzo/Fano K\"ahler-Einstein surfaces, the only compact complex two dimensional shrinking K\"ahler-Ricci solitons are either the $U(2)$-invariant Cao-Koiso soliton on $\CP^2 \# \obar{\CP^2}$ \cite{Cao, Koiso}  or the toric Wang-Zhu soliton on $\CP^2 \# 2\obar{\CP^2}$  \cite{WZ}. Moreover,  the very recent work of Conlon-Deruelle-Sun \cite{CDS:19}, Bamler-Cifarelli-Conlon-Deruelle \cite{BCCD:22}, and Li-Wang \cite{LW:23} have led to a complete classification of complete noncompact gradient shrinking K\"ahler-Ricci solitons in complex dimension two; namely, besides the Gaussian soliton $\C^2$ and the cylinder $\CP^1 \times \C$, they are either the $U(2)$-invariant FIK soliton constructed by Feldman-Ilmanen-Knopf \cite{FIK} on the blowup of $\C^2$ at the origin, or the toric BCCD soliton constructed recently by Bamler-Coifarelli-Conlon-Deruelle \cite{BCCD:22} on the blowup of $\CP^1 \times \C$ at one point.

Combining Theorem \ref{thm:dichotomy}  with the above mentioned facts, we immediately have the following classification result for 4-dimensional complete gradient shrinking Ricci soliton with half nonnegative isotropic curvature. 

\begin{cor} \label{thm:classification}
	Let $(M^4, g, f)$ be a 4-dimensional complete gradient shrinking Ricci soliton with half nonnegative isotropic curvature. Then, either
\begin{enumerate}
		\item[(i)] $(M^4, g, f)$ has half positive isotropic curvature, or 

\smallskip
\item[(ii)] $(M^4, g, f)$ is isometric to the Gaussian soliton $\R^4$ or a finite quotient of  $\rS^2 \times \rS^2$ or $\rS^2 \times \R^2$, or

\smallskip
\item[(iii)] $(M^4, g, f)$ is, up to automorphisms, one of the following: a closed del Pezzo surface with its unique K\"aher-Einstein or K\"ahler-Ricci soliton metric, the FIK soliton on the blowup of $\C^2$ at the origin,  the BCCD soliton on the blowup of $\CP^1 \times \C$ at one point. 

\end{enumerate}
\end{cor}

Our last result treats a special case of the half PIC case.

\begin{thm} \label{thm:main}
	Let $(M^4,g,f)$ be a 4-dimensional complete gradient shrinking Ricci soliton such that its Ricci tensor has an eigenvalue with multiplicity 3. 
	\begin{enumerate}
		\item[(a)] If $(M^4,g,f)$  has half positive isotropic curvature, then it is either isometric to $\rS^4$, $\CP^2$, or a finite quotient of $\rS^3 \times \R$.
	
		\smallskip	
		\item[(b)] If $(M^4,g,f)$  has half nonnegative isotropic curvature, then it is either isometric to  $\rS^4$, $\CP^2$, $\CP^1 \times \CP^1$, the Gaussian soliton $\R^4$, or a finite quotient of $\rS^3 \times \R$.
	\end{enumerate}
\end{thm}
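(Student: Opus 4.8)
The plan is to reduce the theorem to the single implication that the hypotheses force the curvature operator to be nonnegative, $Rm\ge 0$; granting this, the conclusion follows by inspecting the known list. Indeed, by the classification of $4$-dimensional complete gradient shrinking Ricci solitons with $Rm\ge 0$ (Munteanu--Wang \cite{M-W:17}, together with Hamilton's theorem \cite{Ha:86} on the compact factors), any such soliton is a finite quotient of $\rS^4$, $\CP^2$, $\rS^2\times\rS^2$, $\rS^3\times\R$, $\rS^2\times\R^2$, or the Gaussian soliton $\R^4$. Of these, the ones whose Ricci tensor has an eigenvalue of multiplicity $3$ are exactly $\rS^4$, $\CP^2$, $\rS^2\times\rS^2$, $\rS^3\times\R$ and $\R^4$ (the soliton $\rS^2\times\R^2$ is excluded, as its Ricci tensor has no eigenvalue of multiplicity $3$), which is the list in~(b); and among these five, the ones admitting half positive isotropic curvature are precisely $\rS^4$, $\CP^2$ and $\rS^3\times\R$ --- for $\rS^2\times\rS^2$ one has $A=C=\diag(1,0,0)$ up to scaling, and for $\R^4$ one has $A=C=0$, so in neither case is $A$ or $C$ $2$-positive --- which is the list in~(a). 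Note that the assumption on the Ricci tensor is indispensable: half nonnegative isotropic curvature is, for instance, automatic on every K\"ahler shrinker and certainly does not by itself imply $Rm\ge0$.

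To prove that the hypotheses force $Rm\ge0$, I would first invoke Theorem~\ref{thm:quadraticofC1}: after reversing the orientation if necessary, we may assume that the self-dual block $A=\tfrac{R}{12}\,\mathrm{Id}+W^{+}$ of \eqref{eq:Rmdecomp} is positive semidefinite on all of $M$, and in the half PIC case that $A\ge \tfrac{K}{f}\,\mathrm{Id}>0$. Next I would record the algebraic effect of the eigenvalue hypothesis: if at a point $Rc$ has eigenvalues $\lambda$ (simple) and $\mu$ (of multiplicity $3$), then in an eigenframe $\mathring{Rc}=\tfrac{\lambda-\mu}{4}\,\diag(3,-1,-1,-1)$, and a direct computation in the associated orthonormal bases of $\wedge^{\pm}$ shows that the off-diagonal block $B$ in \eqref{eq:Rmdecomp} is \emph{conformal}: $B=\rho\,\mathrm{Id}$ with $\rho=\tfrac{\lambda-\mu}{4}$, so $BB^{t}=B^{t}B=\rho^{2}\,\mathrm{Id}$. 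The set $U:=\{\rho\ne 0\}$ is exactly the non-Einstein locus, which by real-analyticity of the soliton is either empty or dense in $M$.

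If $U=\varnothing$ the soliton is Einstein: a non-flat Einstein shrinker has constant positive scalar curvature, hence is compact by Bonnet--Myers with $f$ constant and $Rc=\tfrac12 g$, so $Rm\ge0$ follows from $A\ge0$ and the Einstein rigidity theorem of Richard--Seshadri \cite{Richard-Seshadri:16} (and its extension to the half nonnegative case, which additionally permits $\rS^2\times\rS^2$), while the flat case is $\R^4$. On $U$, on the other hand, I would use the rigid structure $B=\rho\,\mathrm{Id}$ in the evolution equation $\Delta_f\,Rm = Rm - (Rm^{2}+Rm^{\#})$ satisfied by the curvature operator of a shrinker (here $\Delta_f=\Delta-\nabla_{\nabla f}$, and $Rm^{2}+Rm^{\#}$ is Hamilton's reaction term for the splitting $\mathfrak{so}(4)=\mathfrak{so}(3)^{+}\oplus\mathfrak{so}(3)^{-}$): since $B$ is a multiple of the identity, the mixed couplings in $Rm^{2}+Rm^{\#}$ collapse, and then --- using that $A$ is positive semidefinite (positive definite in the half PIC case), the trace identity $\tr A=\tr C=\tfrac{R}{4}$, and the good behaviour of $\Delta_f$ at infinity for a shrinker (properness of $f$, finiteness of the $e^{-f}$-weighted volume) --- one can run a maximum-principle argument, of the type used to prove Theorem~\ref{thm:quadraticofC1}, for the lowest eigenvalue of the full operator $Rm$ and conclude $Rm\ge0$ on $U$, hence on all of $M$ by continuity. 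Combining the two cases yields $Rm\ge0$ globally, and the classification recalled above finishes the proof.

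I expect the main obstacle to be the argument on $U$: extracting enough positivity from the reaction term $Rm^{2}+Rm^{\#}$ of the \emph{full} curvature operator to propagate nonnegativity, and, in the non-strict (half nonnegative) case, handling the possibility that the smallest eigenvalue of $A$ actually vanishes --- which is exactly where the coupling of the two $\mathfrak{so}(3)$-summands through $B=\rho\,\mathrm{Id}$ must be used rather than treating them separately. An alternative treatment of $U$, bypassing the curvature evolution equation, would be to establish local conformal flatness directly: the structure $B=\rho\,\mathrm{Id}$, together with the contracted second Bianchi identity for gradient shrinkers (which expresses $\mathrm{div}\,W$ through the contraction of $W$ with $\nabla f$) and the $2$-positivity of $A$, should force $W^{+}=W^{-}=0$ on $U$, and one then concludes via the classification of locally conformally flat gradient shrinking Ricci solitons \cite{ENM:08, Ni-Wallach:08, Cao-W-Z:11, Zhang2:09, Petersen-W:10}, whose only representatives having a Ricci eigenvalue of multiplicity $3$ are $\rS^4$, $\rS^3\times\R$ and $\R^4$; but proving this local conformal flatness is itself the crux of that route.
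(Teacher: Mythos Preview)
Your proposal has a genuine gap precisely where you yourself locate it: the argument on the non-Einstein locus $U$. You propose to run a pointwise maximum principle on the lowest eigenvalue of the full operator $Rm$, but you give no mechanism for why the reaction term $Rm^2+Rm^{\#}$ has the required sign when $B=\rho\,\mathrm{Id}$ is nonzero and only \emph{one} of the blocks $A,C$ is known to be nonnegative; the block reaction for $C$ is $C^2+2C^{\#}+B^tB$, and half NNIC gives no control on $C^{\#}$. Your alternative route (forcing $W^{\pm}=0$ on $U$ from the second Bianchi identity) is, as you concede, not carried out either.

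The paper does \emph{not} prove $Rm\ge 0$. It instead applies an integral (Liouville-type) argument to the scalar $|C|/R$ (equivalently $(C_3-C_1)/R$). A direct computation, together with the algebraic inequality $\tfrac14 R^2\sum c_i^2-3R\sum c_i^3\ge 0$ of Li--Ni--Wang (valid under half NNIC, with equality iff $W^-=0$ or $(c_1,c_2,c_3)=(-\tfrac{R}{12},-\tfrac{R}{12},\tfrac{R}{6})$), yields $\Delta_F(|C|/R)\ge 0$ for $F=f-2\log R$; your correct observation that $B^tB=\rho^2\,\mathrm{Id}$ is exactly what kills the otherwise uncontrolled cross term $R\sum_i c_i\tilde b_i^2$ in this computation. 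Since half NNIC gives the pointwise bound $|C|^2\le \tfrac{3}{16}R^2$, the function $|C|/R$ is bounded and lies in $L^2(e^{-F}d\text{vol})$, so the Yau--Naber Liouville theorem forces $|C|/R$ to be constant. Reading off the equality case then gives the dichotomy: either $W^-\equiv 0$ (half locally conformally flat, classified by Chen--Wang/Cao--Chen), or $\mathring{Rc}\equiv 0$ with the specific $c_i$'s above (Einstein with half NNIC, classified by Richard--Seshadri together with Theorem~\ref{thm:kahler}). The ingredients you are missing are the test quantity $|C|/R$, its $F$-subharmonicity, and the Li--Ni--Wang algebraic lemma that closes the estimate.
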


\begin{rmk} The assumption on the Ricci tensor having an eigenvalue with multiplicity 3 is a technical one. We expect that this extra condition could be removed to achieve a full classification.
\end{rmk}

We would like to point out that the condition of half PIC or half weakly PIC is preserved by the Ricci flow  in dimension four; see the proof of Theorem B1.2 in \cite{Ha:97}\footnote{See also \cite{Richard-Seshadri:16} for a different proof.}.  As Hamilton mentioned more than once, an important problem is to understand formation of singularities of the Ricci flow on compact $4$-manifolds with half PIC and use the Ricci flow to investigate the topology of such manifolds. Our work is only an initial attempt in this direction.

\smallskip
The paper is organized as follows. In Section 2, we fix the notation and collect several known facts about the curvature decomposition in dimension 4, especially for K\"ahler surfaces, and about some fundamental properties of gradient shrinking Ricci solitons that will be used in the proof of Theorem \ref{thm:quadraticofC1} and Theorem  \ref{thm:main}. In Section 3, inspired by the work of B.-L. Chen \cite{ChenBL:09} and the work of Chow-Lu-Yang \cite{CLY11}, we prove Theorem \ref{thm:quadraticofC1} on the curvature lower bound estimates. As applications of Theorem \ref{thm:quadraticofC1}, Corollary \ref{thm:kahler} and Theorem \ref{thm:dichotomy} will also be shown. 
Finally, in Section 4, we carry out the proof of Theorem \ref{thm:main}. Unlike the pointwise maximum principle arguments used in the proof of Theorem \ref{thm:quadraticofC1}, this is done by using a version of the Yau-Naber Liouville Theorem from \cite{Petersen-W:10}. \\

\noindent {\bf Acknowledgements.} We would like to thank Professor Richard Hamilton and Professor Lei Ni for their interests in our work. We also like to thank Dr. Jiangtao Yu for discussions and the anonymous referee for helpful comments. 

\bigskip
\section{Preliminaries}
In this section, we fix the notation  and recall the concept of (half) positive isotropic curvature. Moreover, we collect several known results about the curvature operator decomposition for oriented Riemannian 4-manifolds, especially for K\"ahler surfaces, as well as some fundamental facts about gradient shrinking Ricci solitons. Throughout the paper, we denote by
$$Rm=\{R_{ijkl}\}, \quad Rc=\left(R_{ik}\right),\quad R $$
the Riemann curvature tensor, the Ricci tensor, and the scalar curvature of the metric $g$ either in local coordinates or local orthonormal frame, respectively.

\subsection{Positive isotropic curvature}

Let $(M^n,g)$ be an $n$-dimensional Riemannian manifold. For any point $p\in M$, let $Rm:\wedge^2T_pM \rightarrow \wedge^2T_pM$ be the curvature operator. We can complexify the tangent space $T_pM$ and the space of two forms $\wedge^2T_pM$ to get $T_pM \otimes \C$ and $\wedge^2T_pM \otimes \C$, respectively, and consider the $\C$-linear extension of $Rm$ to $\wedge^2T_pM \otimes \C$. We may extend the Riemannian inner product on $T_pM$ either as a $\C$-bilinear form $(\cdot,\cdot)$ or a Hermitian inner product $\la \cdot, \cdot \ra$ on $T_pM \otimes \C$. The latter extension gives rise to a Hermitian metric, again denoted by $\la \cdot, \cdot \ra$, on $\wedge^2T_pM \otimes \C$. For any complex plane $\sigma$ spanned by a unitary basis $\{v,w\} \in T_pM \otimes \C$, we define the {\em complex sectional curvature} of $\sigma$ as
\begin{equation} \label{eq:complexsectionalcur}
	K_{\C}(\sigma):=\la {Rm} (v\wedge w), (v\wedge w) \ra.
\end{equation}

We say that a vector $v\in T_pM\otimes \C$ is {\em isotropic} if $(v,v)=0$, and $\sigma$ is an {\em isotropic complex plane} if every vector in it is isotropic. The Riemannian manifold $(M^n,g)$ is said to have {\em positive isotropic curvature} (PIC) if $K_{\C}(\sigma) >0$, and {\em nonnegative isotropic curvature} (NNIC) if $K_{\C}(\sigma) \geq0$, whenever $\sigma$ is an isotropic complex plane.

If we decompose a complex vector $v$ into its real and imaginary parts by $v=x+iy$, then the condition of $(v,v)=0$ is equivalent to $g(x,x)=g(y,y)$ and $g(x,y)=0$. Thus, a complex plane $\sigma=\text{span}\{v,w\}$ of $T_pM \otimes \C$ is isotropic if and only if there exist orthonormal vectors $\{e_1,e_2,e_3,e_4\}$ such that the unitary basis $\{v,w\} $ can be expressed as
$$ \sqrt{2}v=e_1+ie_2, \quad \sqrt{2}w=e_3+ie_4. $$
By expanding  (\ref{eq:complexsectionalcur}), we see that $(M^n,g)$ has PIC (or NNIC) if and only if
\begin{gather*}
	R_{1313}+R_{1414}+R_{2323}+R_{2424}- 2R_{1234}>0 \\
	(or \ R_{1313}+R_{1414}+R_{2323}+R_{2424}- 2R_{1234} \geq 0)
\end{gather*}
for any orthonormal 4-frame $\{e_1,e_2,e_3,e_4\}$.

\subsection{Curvature decomposition of four-manifolds}

For any 4-dimensional oriented Riemannian manifold $(M^4,g)$, by using the Hodge star operator, we have the decomposition of the bundle of 2-forms 
\begin{equation} \label{eq:decompof2forms}
\wedge^2(M) = \wedge^{+}(M) \oplus \wedge^{-}(M), 
\end{equation}
where $\wedge^+ (M)$ consists of {\em self-dual} 2-forms and $\wedge^- (M)$ {\em anti-self-dual} 2-forms.   We say $(M^4, g)$ has {\em half positive isotropic curvature} (half PIC) if the complex sectional curvature $K_{\C}(\sigma) >0$, and {\em half nonnegative isotropic curvature} (half NNIC) if $K_{\C}(\sigma) \geq 0$, either for all isotropic complex planes $\sigma \subset \wedge^+(M) \otimes \C $ or for all $\sigma \subset \wedge^-(M)\otimes \C $, respectively.  According to the decomposition \eqref{eq:decompof2forms} for $\wedge^2(M)$, we have the following corresponding decomposition of the curvature operator:
\begin{equation} \label{eq:CODecomposition}
	Rm = 
	\begin{pmatrix}
		A & B \\
		B^t & C
	\end{pmatrix}
	=
	\begin{pmatrix}
		\frac{R}{12} + W^+ & \mathring{Rc} \\
		\mathring{Rc} & \frac{R}{12} + W^-
	\end{pmatrix},
\end{equation}
where $W^{\pm}$ denote the self-dual and anti-self-dual Weyl curvature tensors, respectively, and $\mathring{Rc}$ denotes the traceless Ricci tensor\footnote{More precisely, the operator $B:\wedge^{-}(M)\to \wedge^{+}(M)$ is given by $\mathring{Rc}\KN g$, the Kulkarni-Nomizu product of $\mathring{Rc}$ and $g$. In particular, $B$ is identically zero when $(M^4, g)$ is Einstein.}. 

Under the decomposition (\ref{eq:decompof2forms}), for any $p\in M^4$, we may choose a basis for $\wedge_p^+(M)$ and for $\wedge_p^-(M)$ as follows:
\begin{equation*}
	\begin{split}
		\vphi^+_1 = \frac{1}{\sqrt{2}}(e_1\wedge e_2 + e_3\wedge e_4),  \\
		\vphi^+_2 = \frac{1}{\sqrt{2}}(e_1\wedge e_3 + e_4\wedge e_2),  \\
		\vphi^+_3 = \frac{1}{\sqrt{2}}(e_1\wedge e_4 + e_2\wedge e_3),  \\
	\end{split}
	\quad \quad
	\begin{split}
		\vphi^-_1 = \frac{1}{\sqrt{2}}(e_1\wedge e_2 - e_3\wedge e_4),  \\
		\vphi^-_2 = \frac{1}{\sqrt{2}}(e_1\wedge e_3 - e_4\wedge e_2),  \\
		\vphi^-_3 = \frac{1}{\sqrt{2}}(e_1\wedge e_4 - e_2\wedge e_3),  \\
	\end{split}
\end{equation*}
where $\{e_1,e_2,e_3,e_4\}$ is any positively oriented orthonormal basis of $T_pM$. Here, we have used the metric $g$ to identify $T_pM$ and $T_p^*M$. The inner product on 2-forms is defined by
\begin{equation} \label{eq:innerproducttwoforms}
	\la X\wedge Y, V\wedge W \ra = \la X,V\ra \la Y,W\ra - \la X,W\ra \la Y,V\ra .
\end{equation}

Observe that, for the matrices $A$ and $C$ in (\ref{eq:CODecomposition}), 
\begin{equation*}
	\begin{split}
		A_{11}=\frac{1}{2}\left( R_{1212}+R_{3434}+2R_{1234} \right), \\
		A_{22}=\frac{1}{2}\left( R_{1313}+R_{4242}+2R_{1342} \right), \\
		A_{33}=\frac{1}{2}\left( R_{1414}+R_{2323}+2R_{1423} \right), \\
	\end{split}
	\quad \quad
	\begin{split}
		C_{11}=\frac{1}{2}\left( R_{1212}+R_{3434}-2R_{1234} \right), \\
		C_{22}=\frac{1}{2}\left( R_{1313}+R_{4242}-2R_{1342} \right), \\
		C_{33}=\frac{1}{2}\left( R_{1414}+R_{2323}-2R_{1423} \right). \\
	\end{split}
\end{equation*}
As noted in  \cite{Ha:97}, it follows that \[ K_{\C}(\sigma) =\frac 1 2(A_{22}+A_{33}) \] for a positively oriented basis $\{e_1,e_2,e_3,e_4\}$. On the other hand, if the basis had the opposite orientation, one would get 
\[ K_{\C}(\sigma) =\frac 1 2(C_{22}+C_{33}).\] Also, by the first Bianchi identity, $\tr A=\tr C=\frac   R 4.$

For the matrix $B$, we have
\begin{equation} \label{matrixB}
	\begin{split}
		B_{11}=\frac{1}{2}\left( R_{1212}-R_{3434} \right),  \\
	        B_{22}=\frac{1}{2}\left( R_{1313}-R_{4242} \right),  \\
		B_{33}=\frac{1}{2}\left( R_{1414}-R_{2323} \right), \\
	\end{split}
	\quad or \quad
	\begin{split}
		B_{11}=\frac{1}{4}\left( R_{11}+R_{22}-R_{33}-R_{44} \right), \\
		B_{22}=\frac{1}{4}\left( R_{11}+R_{33}-R_{44}-R_{22} \right), \\
		B_{33}=\frac{1}{4}\left( R_{11}+R_{44}-R_{22}-R_{33} \right), \\
	\end{split}
\end{equation}
and
\begin{equation*}
	B_{12}=\frac{1}{2}\left(R_{1213}+R_{3413}-R_{1242}-R_{3442}\right) =\frac{1}{2}\left(R_{23}-R_{14}\right), etc.
\end{equation*}
In fact, 
\begin{equation} \label{fullmatrixB}
	\begin{split}
		B = \frac{1}{2}
		\begin{pmatrix}
			R_{1212}-R_{3434} & R_{23}-R_{14} & R_{24}+R_{13} \\
			R_{23}+R_{14} & R_{1313}-R_{2424} & R_{34}-R_{12} \\
			R_{24}-R_{13} & R_{34}+R_{12} & R_{1414}-R_{2323}
		\end{pmatrix}.
	\end{split}
\end{equation}
Clearly, if we choose a frame  $\{e_1,e_2,e_3,e_4\}$ such that the Ricci tensor $Rc$ is diagonal then the matrix $B$ is also diagonal. In particular, $B$ is identically zero when $(M^4, g)$ is Einstein. Note that we also have the following expression of the traceless Ricci tensor in terms of the matrix $B$:
\begin{equation} \label{traceless Rc&B}
	\begin{split}
		\mathring{Rc} = 
		\begin{pmatrix}
			B_{11}+B_{22}+B_{33} & B_{32}-B_{23} & B_{13}-B_{31} & B_{21}-B_{12} \\
			B_{32}-B_{23} & B_{11}-B_{22}-B_{33} & B_{21}+B_{12} & B_{13}+B_{31} \\
			B_{13}-B_{31} & B_{21}+B_{12} & B_{22}-B_{11}-B_{33} & B_{23}+B_{32} \\
			B_{21}-B_{12} & B_{13}+B_{31} & B_{23}+B_{32} & B_{33}-B_{11}-B_{22}
		\end{pmatrix}.
	\end{split}
\end{equation}
Now, let us denote by $$A_1\leq A_2 \leq A_3 \qquad {\mbox{and}} \qquad  C_1 \leq C_2 \leq C_3$$ the eigenvalues of $A$ and $C$, respectively  and
$$a_1\leq a_2 \leq a_3 \qquad {\mbox{and}} \qquad  c_1 \leq c_2 \leq c_3$$ the eigenvalues of $W^+$ and $W^-$, respectively. Also let $0\leq B_1 \leq B_2 \leq B_3$ be the singular eigenvalues of $B$ and $\lambda_1 \leq \lambda_2 \leq \lambda_3 \leq \lambda_4$ be the eigenvalues of $\mathring{Rc}$. 

By choosing a suitable basis of $\wedge^+(M)$ and of $\wedge^-(M)$, we may assume
\begin{equation*}
	A = 
	\begin{pmatrix}
		\frac{R}{12} +a_1 & 0 & 0 \\
		0 &\frac{R}{12} +a_2 & 0 \\
		0 & 0 & \frac{R}{12} +a_3
	\end{pmatrix}
	,\quad
	C = 
	\begin{pmatrix}
		\frac{R}{12} +c_1 & 0 & 0 \\
		0 &\frac{R}{12} +c_2 & 0 \\
		0 & 0 & \frac{R}{12} +c_3
	\end{pmatrix}.
\end{equation*}

On the other hand, it is well known that 

\begin {itemize} 

\smallskip
\item  PIC is equivalent to $A_1+A_2 >0$ and $C_1+C_2>0$;

\smallskip
\item  NNIC is equivalent to $A_1+A_2 \geq 0$ and $C_1+C_2\geq 0$;

\smallskip
\item Half PIC is equivalent to either $A_1+A_2 >0$ or $C_1+C_2>0$;

\smallskip
\item  Half NNIC is equivalent to either $A_1+A_2 \geq 0$ or $C_1+C_2\geq 0$.

\end {itemize} 

\begin{lem} \textup{\bf (\cite{Li-Ni-Wang:18})} 
	Under the curvature operator decomposition (\ref{eq:CODecomposition}), we have the following algebraic identities for the various eigenvalues defined above:  
	\begin{gather}
		\sum_{i=1}^{4}\lambda_i^3 = 24\det B, \label{s2l2.7e3} \\
		\sum_{i=1}^{3}c_i^3 = 3c_1c_2c_3, \label{s2l2.7e2} \\
		\sum_{i=1}^{3}b_i^2 = \sum_{i=1}^{3}\tilde{b}_i^2 = \frac{1}{4}\sum_{i=1}^{4}\lambda_i^2, \label{s2l2.7e1}
	\end{gather}
	where $b_i^2 = \sum_{j=1}^{3}B_{ij}^2$ and $\tilde{b}_i^2 = \sum_{j=1}^{3}B_{ji}^2$.
\end{lem}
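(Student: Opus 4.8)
The identities in Lemma 2.1 are purely algebraic consequences of the block decomposition \eqref{eq:CODecomposition}, so my plan is to prove each of the three equalities by reducing everything to symmetric function computations in the entries of $B$ (equivalently, in the eigenvalues $\lambda_i$ of $\mathring{Rc}$, the singular values $B_i$, and the Weyl eigenvalues $c_i$).

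\textbf{Identity \eqref{s2l2.7e1}.} The cleanest starting point: $b_i^2 = \sum_j B_{ij}^2$ is the $i$-th diagonal entry of $BB^t$, so $\sum_i b_i^2 = \tr(BB^t) = \|B\|^2$, and likewise $\sum_i \tilde b_i^2 = \tr(B^tB) = \|B\|^2$; the two are trivially equal. To identify $\|B\|^2$ with $\tfrac14\sum_i\lambda_i^2$, I would use the explicit expression for $\mathring{Rc}$ in terms of the $B_{ij}$ given just above the lemma statement, and compute $\|\mathring{Rc}\|^2 = \sum_{\alpha\beta}(\mathring{Rc})_{\alpha\beta}^2$. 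Each entry is a $\pm$-combination of one or two $B_{ij}$; squaring and summing, the cross terms should cancel by symmetry and one is left with a fixed multiple of $\sum_{ij}B_{ij}^2$. Tracking the combinatorics carefully gives $\|\mathring{Rc}\|^2 = 4\|B\|^2$, i.e. $\sum_i\lambda_i^2 = 4\sum_i b_i^2$. (Alternatively, and perhaps more transparently, choose the Ricci-diagonal frame so that $B = \tfrac12\,\mathrm{diag}(R_{11}+R_{22}-R_{33}-R_{44},\ \dots)$ up to the curvature-term part; but since $B$ need not be diagonal in a $\wedge^\pm$-adapted frame, the trace computation above is the safe route.)

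\textbf{Identity \eqref{s2l2.7e2}.} This one, as written, is the elementary identity $\sum c_i^3 + 6c_1c_2c_3 = 3\sum c_i^3$, i.e. $6c_1c_2c_3 = 2\sum c_i^3$, equivalently $c_1^3+c_2^3+c_3^3 = 3c_1c_2c_3$. This is exactly the factorization identity $x^3+y^3+z^3-3xyz = (x+y+z)(x^2+y^2+z^2-xy-yz-zx)$ together with the fact that $W^-$ is trace-free, so $c_1+c_2+c_3 = 0$. So the proof is: $W^\pm$ traceless $\Rightarrow$ $\sum c_i = 0$ $\Rightarrow$ the cubic identity. Same remark applies verbatim to the $a_i$.

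\textbf{Identity \eqref{s2l2.7e3}} is the substantive one and I expect it to be the main obstacle. We want $\sum_i \lambda_i^3 = 24\det B$. Using $\sum\lambda_i = 0$, the left side equals $3\lambda_1\lambda_2\lambda_3\lambda_4\cdot(\text{nothing})$... no — with four variables summing to zero there is no such collapse; instead I would express $\sum\lambda_i^3$ via Newton's identities in terms of $p_2 = \sum\lambda_i^2$ and $p_3$, and relate $\det B$ (a degree-3 invariant of $B$) to cubic invariants of $\mathring{Rc}$. Concretely: from the displayed matrix form of $\mathring{Rc} = \mathring{Rc}(B)$, compute $\tr(\mathring{Rc}^3)$ directly as a cubic polynomial in the $B_{ij}$; on the other hand $\tr(\mathring{Rc}^3) = \sum_i\lambda_i^3$ since $\mathring{Rc}$ is symmetric. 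The claim is that this cubic polynomial equals $24\det B$. Rather than expand a $4\times4$ cube by brute force, I would exploit the structure: the map $B \mapsto \mathring{Rc}(B)$ is essentially the standard identification used in the curvature decomposition, and $\det B$ is the unique (up to scale) $SO(3)\times SO(3)$-invariant cubic in $B$, while $\tr(\mathring{Rc}^3)$ restricted to the image is an $SO(4)$-invariant cubic; matching them on a one-parameter family (e.g. $B = \mathrm{diag}(t,0,0)$ or $B=\mathrm{diag}(s,t,0)$), for which both sides are easy, pins down the constant $24$. The genuine work is verifying that $\tr(\mathring{Rc}^3)$ is indeed proportional to $\det B$ and not a combination $\alpha\det B + \beta\,\tr(BB^t)\cdot(\text{linear})$ — but linear invariants of $B$ vanish (trace of the off-diagonal structure) unless... here one must be slightly careful since $B$ has no a priori symmetry; however $\mathring{Rc}(B)$ depends on $B$ only through the combination appearing in the displayed matrix, and a short rank/degree count shows the only available cubic is $\det B$. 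I would present identity \eqref{s2l2.7e3} last, as it needs \eqref{s2l2.7e1} for the bookkeeping, and flag the invariant-theory shortcut as the way to avoid a page of index manipulation.
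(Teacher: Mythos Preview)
The paper does not actually prove this lemma; it is stated without proof as a list of algebraic identities in Section~2.2. So there is no ``paper's proof'' to compare against, and your task is simply to supply a clean argument. Your treatments of \eqref{s2l2.7e1} and \eqref{s2l2.7e2} are correct and efficient: $\sum b_i^2=\tr(BB^t)=\tr(B^tB)=\sum\tilde b_i^2$, and the explicit $4\times 4$ matrix for $\mathring{Rc}$ gives $|\mathring{Rc}|^2=4|B|^2$ after the cross terms cancel; and \eqref{s2l2.7e2} is exactly $c_1^3+c_2^3+c_3^3-3c_1c_2c_3=(c_1+c_2+c_3)(\cdots)=0$ since $W^-$ is trace-free.

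For \eqref{s2l2.7e3} your invariant-theory route is valid, but you are making this harder than necessary, and your own hesitations (``here one must be slightly careful'') reflect that. The paper already observes, just above the statement of the lemma, that in a frame diagonalizing $Rc$ the matrix $B$ is diagonal as well. So work in that frame: then $B=\mathrm{diag}(B_{11},B_{22},B_{33})$ with $B_{ii}$ given by \eqref{matrixB}, and using $\sum\lambda_i=0$ one finds $B_{11}=\tfrac12(\lambda_1+\lambda_2)$, $B_{22}=\tfrac12(\lambda_1+\lambda_3)$, $B_{33}=\tfrac12(\lambda_1+\lambda_4)$. Hence
\[
\det B=\tfrac18(\lambda_1+\lambda_2)(\lambda_1+\lambda_3)(\lambda_1+\lambda_4)
=\tfrac18\bigl(\lambda_1^3+\lambda_1^2(\lambda_2+\lambda_3+\lambda_4)+\lambda_1 e_2'+\lambda_2\lambda_3\lambda_4\bigr)
=\tfrac18\,e_3,
\]
where $e_3=\sum_{i<j<k}\lambda_i\lambda_j\lambda_k$ and we used $\lambda_2+\lambda_3+\lambda_4=-\lambda_1$. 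Newton's identity with $p_1=0$ gives $\sum\lambda_i^3=3e_3$, so $\sum\lambda_i^3=24\det B$. Since both sides are frame-independent (eigenvalues on the left, a determinant on the right), this suffices. This is shorter than the $SO(3)\times SO(3)$ argument, avoids the uniqueness-of-invariant step you were worried about, and does not rely on \eqref{s2l2.7e1} for any ``bookkeeping''.
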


For the reader's convenience and the sake of completeness, we provide a proof here.

\begin{proof}
	First of all, by the relation between $\mathring{Rc}$ and $B$ in (\ref{traceless Rc&B}), we may assume
	\begin{gather} 
		\lambda_1 = B_1 - B_2 - B_3,  \quad  \lambda_2 = B_2 - B_1 - B_3, \label{eq:tracelessRc&B1} \\
		\lambda_3 = B_3 - B_1 - B_2, \quad  \lambda_4 = B_1 + B_2 + B_3, \label{eq:tracelessRc&B2}
	\end{gather}
	where $0\leq B_1 \leq B_2 \leq B_3$ are the singular eigenvalues of $B$ and $\lambda_1 \leq \lambda_2 \leq \lambda_3 \leq \lambda_4$ are the eigenvalues of $\mathring{Rc}$. Then, by direct computations, we have
	\begin{gather*}
		\lambda_1^3+\lambda_2^3 = -2B_3(3B_1^2+B_2^2+B_3^2-6B_1B_2), \\
		\lambda_3^3+\lambda_4^3 = 2B_3(3B_1^2+B_2^2+B_3^2+6B_1B_2).
	\end{gather*}
	Thus, (\ref{s2l2.7e3}) follows by combining the above two identities.
	
	For (\ref{s2l2.7e2}), since $W^-$ is traceless (i.e., $c_1+c_2+c_3=0$)  it follows that $c_1=-c_2-c_3$. Hence, we have
	\begin{equation*}
		\begin{split}
			c_1^3+c_2^3+c_3^3 & = (-c_2-c_3)^3 +c_2^3+c_3^3 \\
			&= -c_2^3-3c_2^2c_3 - 3c_2c_3^2 - c_3^3 + c_2^3 +c_3^3 \\
			& = -3(c_2+c_3)c_2c_3 \\
			&=3c_1c_2c_3.
		\end{split}
	\end{equation*}
	
	Finally, by (\ref{eq:tracelessRc&B1}), (\ref{eq:tracelessRc&B2}) and direct computations, we obtain $4|B|^2=|\mathring{Rc}|^2$, which implies (\ref{s2l2.7e1}).
\end{proof}

For any K\"ahler surface with complex structure $J$, we can choose a positively oriented orthonormal basis by $\{e_1,Je_1,e_2,Je_2\}$ for the tangent bundle. Then we have a natural basis of $\wedge ^2(M) = \wedge^+(M) \oplus \wedge^-(M)$ for the \ka surface as follows:
\begin{equation} \label{eq:Kahlerbasis}
	\begin{split}
		\vphi^+_1 = \frac{1}{\sqrt{2}}(e_1\wedge Je_1 + e_2\wedge Je_2),  \\
		\vphi^+_2 = \frac{1}{\sqrt{2}}(e_1\wedge e_2 + Je_2\wedge Je_1),  \\
		\vphi^+_3 = \frac{1}{\sqrt{2}}(e_1\wedge Je_2 + Je_1\wedge e_2),  \\
	\end{split}
	\quad \quad
	\begin{split}
		\vphi^-_1 = \frac{1}{\sqrt{2}}(e_1\wedge Je_1 - e_2\wedge Je_2), \\
		\vphi^-_2 = \frac{1}{\sqrt{2}}(e_1\wedge e_2 - Je_2\wedge Je_1), \\
		\vphi^-_3 = \frac{1}{\sqrt{2}}(e_1\wedge Je_2 - Je_1\wedge e_2). \\
	\end{split}
\end{equation}

Using such a special basis and the \ka condition, together with the Bianchi identity, it is known that one has the following curvature operator decomposition for \ka surfaces:
\begin{equation} \label{eq:RmforKahlersurfaces}
	Rm = 
	\begin{pmatrix}
		A & B \\
		B^t & C
	\end{pmatrix}
	=
	\begin{pmatrix}
		\frac{R}{4} & 0 & 0 & d_1 & d_2 & d_3 \\
		0 & 0 & 0 & 0 & 0 & 0 \\
		0 & 0 & 0 & 0 & 0 & 0 \\
		d_1 & 0 & 0 \\
		d_2 & 0 & 0 & &C \\
		d_3 & 0 & 0 \\
	\end{pmatrix}.
\end{equation}
Indeed, first note that the K\"ahler condition implies the following extra curvature properties:
	\begin{equation} \label{eq:KahlerRm1}
		\begin{split}
			Rm(X,Y,Z,W) &=Rm(JX,JY,Z,W) \\
			&=Rm(X,Y,JZ,JW) =Rm(JX,JY,JZ,JW),
		\end{split}
	\end{equation}
	\begin{equation} \label{eq:KahlerRm2}
		Rm(X,JX,Y,JY) = Rm(X,Y,X,Y) + Rm(X,JY,X,JY)
	\end{equation}
and
	\begin{equation} \label{eq:KahlerRc}
		Rc(X,Y) = Rc(JX, JY)
	\end{equation}
for all complex vector fields $X,Y,Z,W$. Now, by using the special basis in (\ref{eq:Kahlerbasis}), 
	\begin{equation*}
		\begin{split}
			2Rm(\vphi_1^{+},\vphi_1^{+}) &=Rm(e_1\wedge Je_1+e_2\wedge Je_2, e_1\wedge Je_1+e_2\wedge Je_2) \\
			&=Rm(e_1,Je_1,e_1,Je_1) +2Rm(e_1,Je_1,e_2,Je_2) + Rm(e_2,Je_2,e_2,Je_2)  \\
			&=Rm(e_1,Je_1,e_1,Je_1) +2Rm(e_1,e_2,e_1,e_2) \\
			&\quad + 2Rm(e_1,Je_2,e_1,Je_2) + Rm(e_2,Je_2,e_2,Je_2)  \\
			&=\frac{R}{2},
		\end{split}
	\end{equation*}
	where we have used  (\ref{eq:KahlerRm2}) in the third equality and (\ref{eq:KahlerRm1}) in the last equality. Similarly, one can compute directly to find 
\[ Rm(\vphi_2^{+},\vphi_2^{+}) =Rm(\vphi_3^{+},\vphi_3^{+})=0, \quad {\mbox{and}} \quad  Rm(\vphi_i^{+},\vphi_j^{+})  =0 \ (i\neq j).\]
Finally, the special form of the matrix B in (\ref{eq:RmforKahlersurfaces}) follows from formula (\ref{fullmatrixB}) and the facts that 
\[ R_{1313}\equiv Rm (e_1, e_2, e_1, e_2) = Rm(Je_1, Je_2, Je_1, Je_2) \equiv R_{2424},\]
\[R_{23}\equiv Rc (Je_1, e_2)=Rc (-e_1, Je_2)\equiv -R_{14}, \]
\[R_{34}\equiv Rc (e_2, Je_2)=Rc (Je_2, -e_2)=-Rc (e_2, Je_2) \equiv -R_{34}, \]
etc.

\subsection{Some basic facts about gradient shrinking Ricci solitons}

First, we recall some basic identities of  gradient shrinking Ricci solitons satisfying equation (\ref{eq:Riccishrinker}).
\begin{lem} \textup{\bf (Hamilton \cite{Ha3:93})} \label{lem:Hamilton}
	Let $(M^n,g,f)$ be an n-dimensional  gradient shrinking Ricci soliton satisfying Eq. (\ref{eq:Riccishrinker}). Then
	$$ R + \Delta f = \frac{n}{2}, $$
	$$ \na_iR = 2R_{ij}\na_jf, $$
	$$ R + |\na f|^2 = f. $$
\end{lem}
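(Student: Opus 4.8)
The plan is to derive all three identities directly from the soliton equation \eqref{eq:Riccishrinker}, namely $R_{ij}+\na_i\na_j f=\frac12 g_{ij}$, by successively tracing it, taking its divergence, and taking a gradient.

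First I would trace \eqref{eq:Riccishrinker} against $g^{ij}$. Since $g^{ij}g_{ij}=n$ and $g^{ij}\na_i\na_j f=\Delta f$, this immediately yields $R+\Delta f=\frac n2$, the first identity. Next, for the second identity I would apply the divergence $\na^i$ to \eqref{eq:Riccishrinker}; the right-hand side is parallel, so $\na^i R_{ij}+\na^i\na_i\na_j f=0$. For the first term I use the contracted second Bianchi identity $\na^i R_{ij}=\frac12\na_j R$, and for the second I commute derivatives via the Bochner-type identity $\Delta(\na_j f)=\na_j(\Delta f)+R_{jk}\na^k f$. Substituting $\na_j\Delta f=-\na_j R$ (from the first identity) gives $\frac12\na_j R-\na_j R+R_{jk}\na^k f=0$, that is, $\na_j R=2R_{jk}\na^k f$.

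Finally, for the third identity I would show that $R+|\na f|^2-f$ is locally constant by computing its gradient. Using \eqref{eq:Riccishrinker} one has $\na_i|\na f|^2=2(\na_i\na_j f)\na^j f=2\bigl(\tfrac12 g_{ij}-R_{ij}\bigr)\na^j f=\na_i f-2R_{ij}\na^j f$, and by the second identity $2R_{ij}\na^j f=\na_i R$; hence $\na_i(R+|\na f|^2-f)=\na_i R+(\na_i f-\na_i R)-\na_i f=0$. Since $M$ is connected, $R+|\na f|^2-f$ is constant, and after normalizing $f$ by an additive constant as in \eqref{eq: normalized f} this constant is $0$, giving $R+|\na f|^2=f$.

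The whole argument is routine manipulation of the defining equation, so there is no substantial obstacle; the only step requiring genuine care is the commutation formula $\Delta\na_j f=\na_j\Delta f+R_{jk}\na^k f$ in the second part, where one must keep track of the sign convention in the Ricci identity.
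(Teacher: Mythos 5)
Your proposal is correct and is exactly the standard derivation (trace the soliton equation, take its divergence with the contracted second Bianchi identity and the commutation formula $\Delta\na_j f=\na_j\Delta f+R_{jk}\na^k f$, then show $R+|\na f|^2-f$ has vanishing gradient and absorb the constant into the normalization \eqref{eq: normalized f}); the paper itself gives no proof but cites Hamilton, whose argument is the same. All signs and steps check out, so there is nothing to add.
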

Moreover, we have the following well-known differential identities on the curvatures $R$, $Rm$ and its three components $A, B, C$. They are the special case of the curvature evolution equations under the Ricci flow derived by Hamilton \cite{Ha:86}. 

\begin{lem} \textup{\bf (Hamilton \cite{Ha:86})} \label{lem:ellieqofsolitons}
	Let $(M^4,g,f)$ be a 4-dimensional  gradient shrinking Ricci soliton satisfying Eq. (\ref{eq:Riccishrinker}). Then
	\begin{gather*}
		\Delta_f R = R - 2|Rc|^2, \\
		\Delta_f Rm =Rm - 2(Rm^2+Rm^{\sharp}), \\
		\Delta_f A = A - 2(A^2 +2A^{\sharp} +BB^t), \\
		\Delta_f B = B - 2(AB +BC + 2B^{\sharp}), \\
		\Delta_f C = C - 2(C^2 +2C^{\sharp} +B^tB).
	\end{gather*}
Here, $\Delta_f=\Delta -\na f\cdot\na $ is the drift Laplacian, $C^2$ denotes the matrix square of $C$, and $C^{\sharp}$ is the transpose of the adjoint matrix of $C$, etc. 
\end{lem}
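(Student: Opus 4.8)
\textbf{Proof proposal for Lemma \ref{lem:ellieqofsolitons}.}

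The plan is to reduce everything to Hamilton's well-known reaction-diffusion equations for curvature under the Ricci flow, and then to ``de-time-ize'' them using the soliton structure. Recall that along a solution of the Ricci flow $\pa_t g = -2Rc$, Hamilton \cite{Ha:86} derived the evolution equations
$$\pa_t R = \Delta R + 2|Rc|^2, \qquad \pa_t Rm = \Delta Rm + 2(Rm^2 + Rm^\sharp),$$
where $Rm^2$ is the matrix square of the curvature operator on $\wedge^2$ and $Rm^\sharp$ is its Lie-algebra square with respect to the bracket on $\wedge^2 T_pM \cong \mathfrak{so}(n)$ (the transpose of the adjoint in the $n=4$ block formalism). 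In dimension four, using the block decomposition \eqref{eq:CODecomposition} and the fact that the Lie bracket on $\mathfrak{so}(4) = \mathfrak{so}(3)\oplus\mathfrak{so}(3)$ splits, the $Rm^2 + Rm^\sharp$ term decomposes blockwise; this is precisely Hamilton's computation giving
$$\pa_t A = \Delta A + 2(A^2 + 2A^\sharp + BB^t), \quad \pa_t B = \Delta B + 2(AB + BC + 2B^\sharp), \quad \pa_t C = \Delta C + 2(C^2 + 2C^\sharp + B^tB).$$
I will cite these as established in \cite{Ha:86} rather than rederive them.

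Next I would pass from the Ricci flow picture to the soliton. A gradient shrinking soliton satisfying \eqref{eq:Riccishrinker} generates a self-similar solution of the Ricci flow: setting $\tau = 1 - t$ and flowing by the diffeomorphisms generated by $\tfrac{1}{\tau}\na f$, the metric evolves as $g(t) = \tau\, \psi_t^* g$. Under this self-similar evolution, for any quantity $T$ built naturally from the metric and homogeneous of the appropriate weight, the time derivative $\pa_t T$ (evaluated at $t=0$) equals $\mathcal{L}_{\na f} T$ plus the scaling term coming from the $\tau$-factor. Concretely, the standard bookkeeping gives, for the scalar curvature, $\pa_t R = \na f\cdot\na R + R$ (the $+R$ being the weight-$(-1)$ scaling contribution, matching $R+\Delta f = n/2$ and $\na R = 2Rc(\na f)$ from Lemma \ref{lem:Hamilton}); for the curvature operator $Rm$ on $\wedge^2$, which scales like $R$, one gets $\pa_t Rm = \na f\cdot\na Rm + Rm$; and likewise for each block $A$, $B$, $C$. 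Equating the two expressions for $\pa_t$ and rearranging $\Delta - \na f\cdot\na = \Delta_f$ yields exactly the five identities claimed. For the scalar equation this reproduces the first line, $\Delta_f R = R - 2|Rc|^2$, which also follows directly by tracing \eqref{eq:Riccishrinker} and using the contracted second Bianchi identity — I may include that short direct derivation as a sanity check.

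The main technical point — and the place I would be most careful — is the precise scaling weight and the sign of the zeroth-order (``linear'') term for each of $Rm, A, B, C$. Because $Rm$ (as an operator on $\wedge^2$, with the metric used to raise indices) scales the same way as $R$ under $g \mapsto \lambda g$, its self-similar linear term is $+Rm$, not $+2Rm$ or $0$; the blocks $A, B, C$ inherit the same weight since they are just the components of $Rm$ in the $\wedge^\pm$ splitting, whose defining frames $\vphi^\pm_i$ are $g$-orthonormal and hence rescale consistently. Once the weight is pinned down, matching it against the quadratic reaction terms from Hamilton's equations is purely algebraic. An alternative, entirely time-free route — which I would mention as a remark — is to differentiate \eqref{eq:Riccishrinker} twice, commute derivatives using the soliton identities $\na_i\na_j f = \tfrac12 g_{ij} - R_{ij}$ and $\na_i R = 2R_{ij}\na_j f$, and invoke the second Bianchi identity to convert $\na^2 Rm$ terms into $\Delta Rm$ plus curvature-quadratic corrections; this is more computational but avoids any appeal to the flow. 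Either way, the content is Hamilton's, and the lemma is essentially a translation statement.
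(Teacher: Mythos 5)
Your proposal is correct and matches the paper's own treatment: the lemma is stated there without proof, precisely as the specialization of Hamilton's Ricci-flow evolution equations for $R$, $Rm$ and the blocks $A,B,C$ to the self-similar solution generated by the soliton, which is exactly the translation you sketch (and your alternative time-free derivation from \eqref{eq:Riccishrinker} and the second Bianchi identity is the standard way to make it fully rigorous). The only caveats are ones you partially gloss: as the paper's remark notes, with the inner product \eqref{eq:innerproducttwoforms} on $\wedge^2(M)$ the quadratic terms carry the factor $2$ shown, which differs from Hamilton's original normalization, and for the tensorial quantities the pullback along the diffeomorphisms produces the Lie derivative $\mathcal{L}_{\nabla f}$ rather than the drift term $\nabla f\cdot\nabla$, the discrepancy being absorbed using $\nabla^2 f=\tfrac12 g-Rc$.
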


\begin{rmk}
	 Except for the first equation in Lemma \ref{lem:ellieqofsolitons}, the factor $2$ in the front of parentheses differs from the corresponding equations in \cite{Ha:86} due to our slightly different definition of the inner product on $\wedge^2(M)$ as given in (\ref{eq:innerproducttwoforms}).
\end{rmk}

Next, we state several fundamental facts about complete gradient shrinking Ricci solitons that we shall need later. 

\begin{lem} \textup{\bf (Chen \cite{ChenBL:09})} \label{lem:Chen}
	Let $(M^n,g,f)$ be an n-dimensional complete gradient shrinking Ricci soliton. Then it has nonnegative scalar curvature $R\geq 0$.
\end{lem}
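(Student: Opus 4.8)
\textbf{Proof proposal for Lemma~\ref{lem:Chen}.}

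The plan is to use the maximum-principle (more precisely, the ``minimum principle at infinity'') structure that is available on shrinking Ricci solitons, exploiting the evolution equation $\Delta_f R = R - 2|Rc|^2$ together with the growth control on $f$. First I would recall the standard quadratic estimate for the potential function: after the normalization $R+|\na f|^2 = f$, one has (Cao--Zhou) that there are constants such that $\frac14(r(x)-c)^2 \le f(x) \le \frac14(r(x)+c)^2$, where $r(x) = d(x,p)$ is the distance from a fixed point. In particular $f$ is proper and $\Delta_f f = \frac n2 - R + |\na f|^2$... actually more useful is $\Delta_f f = \frac n2 - R \le \frac n2$ together with $f \to \infty$ at infinity, which makes $f$ an exhaustion function with bounded drift Laplacian — the ingredient needed to run a maximum principle for $\Delta_f$ on the noncompact manifold.

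The core argument: suppose for contradiction that $\inf_M R < 0$. The function $R$ satisfies $\Delta_f R = R - 2|Rc|^2$. At a point (or in a limit sequence) where $R$ is negative and close to its infimum, we need $\Delta_f R \ge 0$ (by the minimum principle), but $R - 2|Rc|^2 \le R < 0$ there, a contradiction — \emph{provided} the infimum is attained or one can produce an approximate minimizing sequence along which the bad sign survives. Since $M$ may be noncompact, I would not expect $R$ to attain its infimum, so the honest route is Omori--Yau type: the weighted manifold $(M, g, e^{-f}dV)$ has $\mathrm{Ric}_f = Rc + \na^2 f = \frac12 g > 0$, in particular $\mathrm{Ric}_f$ is bounded below, so the Omori--Yau maximum principle for the drift Laplacian $\Delta_f$ holds. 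Apply it to $-R$: there is a sequence $x_k$ with $-R(x_k) \to \sup(-R) = -\inf R$, $|\na R|(x_k) \to 0$, and $\limsup_k \Delta_f(-R)(x_k) \le 0$, i.e. $\liminf_k \Delta_f R(x_k) \ge 0$. But $\Delta_f R(x_k) = R(x_k) - 2|Rc|^2(x_k) \le R(x_k) \to \inf R < 0$, giving $0 \le \liminf \Delta_f R(x_k) \le \inf R < 0$, a contradiction. Hence $\inf R \ge 0$.

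The main obstacle — really the only delicate point — is justifying the applicability of the Omori--Yau maximum principle for $\Delta_f$ in this setting: one needs either a bound on $\mathrm{Ric}_f$ from below (which we have, $=\frac12 g$) \emph{plus} some control ensuring completeness of the weighted structure, or alternatively to bypass Omori--Yau entirely and use Cao--Zhou's exhaustion by the sublevel sets $\{f \le s\}$, which are compact, together with a cutoff/barrier using $f$ itself: on a large sublevel set apply the classical minimum principle to $R + \ep f$ for small $\ep$, note $\Delta_f(R+\ep f) = R - 2|Rc|^2 + \ep(\frac n2 - R)$, and at an interior minimum over a compact sublevel set deduce $R \le \frac{\ep}{1 - \ep}(R - \frac n2) \cdot(\text{something})$, then let the sublevel sets exhaust $M$ and $\ep \to 0$. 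Either implementation works; I would likely present the cleaner Omori--Yau version and remark that Cao--Zhou's estimates supply the needed completeness/properness, since $f$ being a proper exhaustion with $\na^2 f = \frac12 g - Rc$ controlled is exactly what makes the weighted manifold ``parabolic enough'' for the minimum principle. One should also note the trivial but necessary remark that the case $R \equiv 0$ is consistent (Gaussian soliton), so the conclusion is genuinely $R \ge 0$ and not $R > 0$.
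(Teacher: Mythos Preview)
The paper does not give its own proof of this lemma; it simply cites Chen \cite{ChenBL:09}. Chen's original argument is parabolic: he shows that \emph{any} complete ancient solution to the Ricci flow has $R \ge 0$ by working with the evolution $(\partial_t - \Delta)R = 2|Rc|^2$ and a localized space--time cutoff built from the distance function (via Perelman's distance--distortion estimate), then letting the initial time go to $-\infty$. Shrinking solitons generate ancient solutions, so the conclusion follows. You can see exactly this template in the paper's proof of Proposition~\ref{pro:positiveC1}(a), which explicitly says it follows Chen's argument.

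Your elliptic route via a drifted Omori--Yau principle is a genuinely different strategy, and variants of it do exist in the literature, but as written there are two real gaps. First, applying Omori--Yau to $-R$ requires $-R$ to be bounded above, i.e.\ $R$ bounded below; you never establish this, and nothing in the soliton identities gives it for free. Second --- and this is the more serious point --- you justify the weighted Omori--Yau principle by invoking Cao--Zhou's growth estimates for $f$ (Lemma~\ref{lem:Cao-Zhou}) to obtain a proper exhaustion, but Cao--Zhou's proof \emph{uses} $R \ge 0$: for instance, the bound $|\na f|^2 \le f$ (hence $|\na\sqrt{f}|\le \tfrac12$) comes directly from $R\ge 0$. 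So the appeal is circular; note that the paper places Lemma~\ref{lem:Chen} before Lemma~\ref{lem:Cao-Zhou} precisely because of this logical dependence. The bare hypothesis $\mathrm{Ric}_f = \tfrac12 g$ is an $\infty$--Bakry--\'Emery lower bound, and without independent control on $|\na f|$ or an exhaustion built from the raw distance function, it is not known to yield Omori--Yau for $\Delta_f$. Both issues are repairable --- one localizes with a cutoff in $d_g(\cdot,p)$ rather than in $f$, which is exactly what Chen does --- but the argument as you have sketched it is not self-contained.
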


\begin{rmk} \label{rmk:R>0}
	It was further observed by Pigola-Rimoldi-Setti \cite{Pigola-R-S:11} that either $R>0$ everywhere or the shrinking Ricci soliton is the Gaussian shrinking soliton on $\R^n$.
\end{rmk}

\begin{lem} \textup{\bf (Cao-Zhou \cite{Cao-Zhou:10})} \label{lem:Cao-Zhou}
	Let $(M^n,g,f)$ be an n-dimensional complete gradient shrinking Ricci soliton of dimension n and $p\in M$. Then there are positive constants $c_1$, $c_2$ and $C$ such that
	$$  \frac{1}{4}(d(x,p)-c_1)_+^2 \leq f(x) \leq \frac{1}{4}(d(x,p)+c_2)^2,  $$
	$$  \Vol(B_p(r)) \leq Cr^n.  $$
\end{lem}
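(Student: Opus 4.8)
\emph{Overview and setup.} The plan is to first establish the two-sided quadratic control on the potential $f$ and then extract the polynomial volume bound from it. I would normalize $f$ via Lemma~\ref{lem:Hamilton} so that $R+|\na f|^2=f$. Since $R\ge 0$ by Lemma~\ref{lem:Chen}, this already gives $f=R+|\na f|^2\ge 0$ on all of $M$, and on $\{f>0\}$
$$|\na\sqrt f|=\frac{|\na f|}{2\sqrt f}\le\frac{\sqrt f}{2\sqrt f}=\frac12 ,$$
so $\sqrt f$ is $\tfrac12$-Lipschitz. Hence $\sqrt{f(x)}\le\sqrt{f(p)}+\tfrac12\,d(x,p)$, which yields the upper bound $f(x)\le\tfrac14\bigl(d(x,p)+c_2\bigr)^2$ with $c_2:=2\sqrt{f(p)}$; in particular $f$ is proper.

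\emph{The lower bound.} Fix $x\in M$ and a unit-speed minimizing geodesic $\gamma:[0,s_0]\to M$ from $p$ to $x$ (with $s_0=d(x,p)$), and set $h(s)=f(\gamma(s))$. From the soliton equation~\eqref{eq:Riccishrinker} one has $h''(s)=\na^2 f(\gamma',\gamma')=\tfrac12-Rc(\gamma'(s),\gamma'(s))$, hence
$$h'(s)=h'(0)+\frac s2-\int_0^s Rc(\gamma',\gamma')\,d\tau .$$
The key estimate I would aim for is an upper bound $\int_0^s Rc(\gamma',\gamma')\,d\tau\le c_0$, uniform in $s\le s_0$, with $c_0$ depending only on $n$ and $\sup_{B_p(r_0)}|Rm|$ for a fixed radius $r_0$. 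Granting this, $h'(s)\ge\tfrac s2-(c_0+|h'(0)|)$, so integrating once more and using $h(0)\ge 0$ gives $f(x)=h(s_0)\ge\tfrac{s_0^2}{4}-Cs_0+O(1)\ge\tfrac14(s_0-c_1)^2$ for an appropriate $c_1>0$ (the inequality being vacuous when $s_0\le c_1$, since $(\,\cdot\,-c_1)_+=0$ there). To prove the key estimate I would use the second variation of arc length: as $\gamma$ is minimizing, $\int_0^s\bigl(|\na_{\gamma'}V|^2-\langle R(V,\gamma')\gamma',V\rangle\bigr)\,d\tau\ge 0$ for piecewise-smooth $V\perp\gamma'$ with $V(0)=V(s)=0$; inserting $V=\phi E_i$ with $\{E_i\}_{i=1}^{n-1}$ a parallel orthonormal frame orthogonal to $\gamma'$ and $\phi$ a piecewise-linear cutoff that equals $1$ on the bulk of $[0,s]$ and tapers to $0$ at both ends, and summing over $i$, gives $\int_0^s\phi^2 Rc(\gamma',\gamma')\,d\tau\le (n-1)\int_0^s(\phi')^2\,d\tau$. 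The end contribution near $p$ is absorbed by the curvature bound on the fixed ball $B_p(r_0)$, and the contribution near $x$ is handled by feeding back the upper bound on $f$ just established (which controls $|\na f|$, hence $|h'|$, near $x$) together with $Rc(\gamma',\gamma')=\tfrac12-h''$; this is the scheme of B.-L.~Chen~\cite{ChenBL:09} and of Cao--Zhou~\cite{Cao-Zhou:10}.

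\emph{The volume estimate.} With the two-sided bound $\tfrac14(d(x,p)-c_1)_+^2\le f\le\tfrac14(d(x,p)+c_2)^2$ in hand, I would deduce $\Vol(B_p(r))\le Cr^n$ by a comparison argument: using $B_p(r)\subset\{f\le\tfrac14(r+c_2)^2\}$, the co-area formula, and the identity $\Delta f=\tfrac n2-R$ together with the positivity of the Bakry--Émery Ricci tensor $Rc+\na^2 f=\tfrac12 g$, one controls $\Vol(\{f<t\})$ and hence $\Vol(B_p(r))$ by a polynomial in $r$, as in Cao--Zhou~\cite{Cao-Zhou:10}.

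\emph{Main obstacle.} The delicate point is the uniform upper bound on $\int_0^s Rc(\gamma',\gamma')\,d\tau$ along minimizing geodesics, and within it the control of the Ricci integral near the far endpoint $x$, where no a~priori geometric bound is available: there one must reinvest the already-proved upper bound on $f$, via a short ODE/iteration argument with a suitable family of cutoff functions. The volume step is also somewhat technical, but it is by now standard once the two-sided estimate on $f$ is available.
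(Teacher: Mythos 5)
Your proposal is correct and follows exactly the argument of Cao--Zhou \cite{Cao-Zhou:10}, which is also what the paper does here: Lemma \ref{lem:Cao-Zhou} is quoted without proof, by citation to that reference. The three ingredients you outline (the $\tfrac12$-Lipschitz bound on $\sqrt f$ from $R+|\na f|^2=f$ and $R\ge 0$, the second-variation estimate on $\int Rc(\gamma',\gamma')$ along minimizing geodesics with the far-endpoint term absorbed via the upper bound on $|\na f|$, and the volume bound via $\Delta f=\tfrac n2-R$ and the co-area formula on sublevel sets of $f$) are precisely the steps of the original proof.
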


\begin{lem} \textup{\bf (Munteanu-Sesum \cite{M-S:13})} \label{lem:M-S}
	Let $(M^n,g,f)$ be an n-dimensional complete gradient shrinking Ricci soliton. Then for any $\lambda >0$, we have
	$$  \int_M |Rc|^2 e^{-\lambda f} < \infty.  $$
\end{lem}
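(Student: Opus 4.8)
\textbf{Proof proposal for Lemma \ref{lem:M-S} (the integrability bound $\int_M |Rc|^2 e^{-\lambda f}<\infty$).}

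The plan is to run a weighted integration-by-parts argument against a cutoff function, using the soliton identities from Lemma \ref{lem:Hamilton} to convert the integral of $|Rc|^2$ into the integral of $R$ (which is far more tractable), and then to control $\int_M R\, e^{-\lambda f}$ by the volume growth bound and $f$-asymptotics of Lemma \ref{lem:Cao-Zhou}. First I would recall the pointwise identity following from \eqref{eq:Riccishrinker} that $\Delta_f f = \tfrac n2 - R$, equivalently $\Delta f - \na f\cdot\na f = \tfrac n2 - R - |\na f|^2 = \tfrac n2 - f$ after using the normalization $R+|\na f|^2 = f$; more usefully, from $\na_i R = 2R_{ij}\na_j f$ and $\dive(Rc) = \tfrac12\na R$ (contracted second Bianchi) one gets $R_{ij}\na_i f\na_j f = \tfrac12\na f\cdot\na R$. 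The key algebraic observation is the classical soliton identity $\Delta_f R = R - 2|Rc|^2$ (the first line of Lemma \ref{lem:ellieqofsolitons}), which rearranges to
\begin{equation*}
2|Rc|^2 = R - \Delta_f R = R - \Delta R + \na f\cdot\na R.
\end{equation*}
So it suffices to show each of $\int_M R\,e^{-\lambda f}$, $\int_M (\Delta R)\,e^{-\lambda f}$, and $\int_M (\na f\cdot\na R)\,e^{-\lambda f}$ is finite.

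For the first term, Lemma \ref{lem:Chen} gives $R\ge 0$, and more crudely $0\le R\le f$ by the normalization $R+|\na f|^2=f$; combined with $f(x)\le \tfrac14(d(x,p)+c_2)^2$ and $\Vol(B_p(r))\le Cr^n$ from Lemma \ref{lem:Cao-Zhou}, together with the lower bound $f(x)\ge \tfrac14(d(x,p)-c_1)_+^2$, the integral $\int_M R\,e^{-\lambda f}\le \int_M f\,e^{-\lambda f}$ converges by the usual shell decomposition: on the annulus $B_p(r+1)\setminus B_p(r)$ the integrand is bounded by $Cr^2 e^{-\lambda r^2/4 + O(r)}$ times a volume $\le C(r+1)^n$, and $\sum_r r^{n+2}e^{-\lambda r^2/4+O(r)}<\infty$. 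For the remaining two terms I would introduce a cutoff $\phi = \phi(f)$, say $\phi_k$ supported in $\{f\le 2k\}$, equal to $1$ on $\{f\le k\}$, with $|\phi_k'|\le C/k$ and $|\phi_k''|\le C/k^2$ (legitimate since $f$ is proper by Lemma \ref{lem:Cao-Zhou}), and integrate by parts:
\begin{equation*}
\int_M \phi_k\,\big(-\Delta R + \na f\cdot\na R\big)e^{-\lambda f}
= \int_M \phi_k\,(-\Delta_f R)\,e^{-\lambda f}
= \int_M R\,\Delta_f^{*}(\phi_k e^{-\lambda f}),
\end{equation*}
where $\Delta_f$ is self-adjoint with respect to $e^{-f}\,dV$, so one writes things in terms of $e^{-f}\,dV$ and a residual weight $e^{(1-\lambda)f}$; expanding $\Delta_f(\phi_k e^{(1-\lambda)f})$ produces terms with factors $\phi_k''$, $\phi_k'$, and $\phi_k$, each multiplied by $|\na f|^2\le f$ and bounded powers of $f$, all against the already-integrable weight $R\,e^{-\lambda f}$ (using $R\le f$ again) on the annulus $\{k\le f\le 2k\}$. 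Letting $k\to\infty$, the cutoff derivative terms vanish because $\int_{\{f\ge k\}} f^{m}e^{-\lambda f}\to 0$, and we are left with a finite limit; the boundary/monotone convergence step needs $2|Rc|^2\ge 0$ so $\int_M\phi_k|Rc|^2 e^{-\lambda f}$ is monotone increasing in $k$, forcing the limit to be exactly $\int_M|Rc|^2 e^{-\lambda f}$, which is therefore finite.

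The main obstacle I anticipate is the justification of the integration by parts itself: a priori we do not know $|Rc|$ or $|\na R|$ is integrable against any weight, so the manipulation $\int \phi_k \Delta_f R\, e^{-\lambda f} = \int R\,\Delta_f^*(\cdots)$ must be done with a genuinely compactly supported cutoff (hence the properness of $f$ is essential) and with care that all intermediate integrands — in particular $\phi_k|\na R|\,|\na f|\,e^{-\lambda f}$ — are finite; one can sidestep the missing gradient bound by never leaving a lone $\na R$ exposed, i.e. always integrating $\na R$ by parts onto the smooth weight $\phi_k e^{-\lambda f}$ in a single stroke rather than splitting $\Delta R$ and $\na f\cdot\na R$ separately. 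A secondary technical point is choosing $\phi_k$ as a function of $f$ rather than of $d(\cdot,p)$, so that $|\na\phi_k| = |\phi_k'||\na f|$ and $|\na f|^2\le f\le 2k$ on the support gives the clean $O(k^{-1/2})$-type decay after pairing with the Gaussian weight; this is cleaner than dealing with the non-smoothness of the distance function. Apart from these bookkeeping issues the argument is routine, and the choice of $\lambda$ plays no role beyond ensuring $e^{-\lambda f}$ beats the polynomial volume growth and the linear-in-$f$ bound on $R$ and $|Rc|^2$-after-rearrangement.
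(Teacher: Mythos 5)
The paper does not prove this lemma at all --- it is quoted from Munteanu--Sesum \cite{M-S:13} --- and your argument is essentially the standard proof from that reference: rewrite $2|Rc|^2 = R - \Delta_f R$ via Lemma \ref{lem:ellieqofsolitons}, control $\int_M R\,e^{-\lambda f}$ using $0\le R\le f$ together with the quadratic growth of $f$ and the polynomial volume growth of Lemma \ref{lem:Cao-Zhou}, and dispose of $\int_M \phi_k\,(\Delta_f R)\,e^{-\lambda f}$ by integrating by parts against a compactly supported cutoff $\phi_k(f)$ (properness of $f$), all of whose terms are dominated by $\mathrm{poly}(f)\,e^{-\lambda f}$ and whose cutoff-derivative contributions vanish as $k\to\infty$, so that Fatou/monotone convergence closes the argument. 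This is correct; the only slip is the clause ``$\Delta_f f = \tfrac n2 - R$,'' which should read $\Delta f = \tfrac n2 - R$ (equivalently $\Delta_f f = \tfrac n2 - f$), exactly as your next line in fact states, and this plays no role in the estimate.
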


Finally, we shall need the following Yau-Naber Liouville type theorem, and also an extension of Hamilton's tensor maximum principle by Petersen-Wylie \cite{Petersen-W:10}.

\begin{lem} \textup{\bf (Yau-Naber Liouville Theorem \cite{Petersen-W:10})} \label{lem:Yau-NaberLiouville}
	Let $(M, g, h)$ be a smooth metric measure space with finite $h$-volume: $\int e^{-h} \ d\text{vol} < \infty$. If $u$ is a locally Lipschitz function in $L^2(e^{-h}d\text{vol}_g)$ which is bounded below and such that
	$$ \Delta_h u \geq 0 $$
	in the sense of barriers, then $u$ is a constant.
\end{lem}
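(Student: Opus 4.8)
The plan is to reproduce the classical Yau Liouville argument in the weighted setting, the point being that finiteness of the weighted volume $\mu(M):=\int_M e^{-h}\,d\mathrm{vol}_g$ plays the role that a volume-growth bound plays in Yau's theorem. Since $\Delta_h$ annihilates constants and $u$ is bounded below, I would first replace $u$ by $u-\inf_M u$ and assume from now on that $u\ge 0$; here the hypotheses $\mu(M)<\infty$ and ``bounded below'' are both used, as $u-\inf_M u$ still lies in $L^2(e^{-h}\,d\mathrm{vol}_g)$ precisely because $\mu(M)<\infty$, and it is constant if and only if $u$ is. I would also assume $M$ complete (implicit in the statement), so that the metric balls $B_p(R)$ exhaust $M$.

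The first real step is to upgrade the hypothesis $\Delta_h u\ge 0$ ``in the sense of barriers'' to the weak inequality
\[
  \int_M \langle \nabla u,\nabla\varphi\rangle\, e^{-h}\,d\mathrm{vol}_g \ \le\ 0
  \qquad\text{for all } 0\le \varphi\in C_c^\infty(M).
\]
This is a standard PDE fact: a continuous barrier-subsolution of the elliptic operator $\Delta_h$ obeys the comparison principle against classical $\Delta_h$-supersolutions (Calabi's trick), hence on every small ball it is dominated by its $\Delta_h$-harmonic replacement, hence it is a distributional subsolution; since $u$ is locally Lipschitz it belongs to $W^{1,2}_{\mathrm{loc}}(M)$, and because $\Delta_h$ is formally self-adjoint with respect to $e^{-h}\,d\mathrm{vol}_g$ the distributional inequality is exactly the displayed weak inequality (valid for nonnegative $W^{1,2}$ test functions of compact support by density).

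The second step is a weighted Caccioppoli estimate. Fixing $p\in M$ and $r(x)=d(x,p)$, I would choose for each $R>0$ a Lipschitz cutoff $\phi_R=\eta(r/R)$ with $\phi_R\equiv 1$ on $B_p(R)$, $\operatorname{supp}\phi_R\subset B_p(2R)$, $0\le\phi_R\le 1$, $\phi_R$ nondecreasing in $R$, and $|\nabla\phi_R|\le 2/R$. Since $u\ge 0$, the function $\varphi=\phi_R^2 u$ is a nonnegative compactly supported Lipschitz function and hence admissible above; expanding $\nabla(\phi_R^2u)$ and applying Young's inequality $2ab\le\tfrac12 a^2+2b^2$ yields
\[
  \tfrac12\int_M \phi_R^2|\nabla u|^2\,e^{-h}\,d\mathrm{vol}_g
  \ \le\ 2\int_M u^2|\nabla\phi_R|^2\,e^{-h}\,d\mathrm{vol}_g
  \ \le\ \frac{8}{R^2}\int_{B_p(2R)\setminus B_p(R)} u^2\,e^{-h}\,d\mathrm{vol}_g .
\]
Because $u\in L^2(e^{-h}\,d\mathrm{vol}_g)$ the right-hand side tends to $0$ as $R\to\infty$, while the left-hand side increases to $\tfrac12\int_M|\nabla u|^2\,e^{-h}\,d\mathrm{vol}_g$ by monotone convergence. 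Hence $\nabla u\equiv 0$ almost everywhere; as $u$ is continuous and $M$ is connected, $u$ is constant, and undoing the normalization finishes the proof.

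I expect the only genuinely delicate point to be the first step, the passage from the barrier inequality to the weak inequality, since ``in the sense of barriers'' is a priori a weaker notion than the weak formulation and the reduction requires the comparison principle for barrier-subsolutions together with the $W^{1,2}_{\mathrm{loc}}$ regularity coming from local Lipschitz continuity. Everything after that is the standard cutoff/integral argument, whose entire force comes from the finite-weighted-volume hypothesis via the vanishing of the $L^2$ tail $\int_{B_p(2R)\setminus B_p(R)}u^2\,e^{-h}\,d\mathrm{vol}_g$.
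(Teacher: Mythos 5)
Your proof is correct: the normalization by $\inf_M u$, the upgrade from the barrier inequality to the weak inequality via Calabi's trick together with the $W^{1,2}_{\mathrm{loc}}$ regularity from local Lipschitz continuity, and the weighted Caccioppoli estimate with the cutoff $\phi_R$ and the test function $\phi_R^2 u$ constitute precisely the standard weighted Yau-type argument. Note that the paper itself offers no proof of this lemma---it is quoted from Petersen--Wylie \cite{Petersen-W:10}---and your argument is essentially the one underlying that reference, so there is nothing to contrast beyond observing that your treatment of the barrier-to-weak passage (the only genuinely delicate point) is adequate.
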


\begin{lem} \textup{\bf (\cite{Petersen-W:10})} \label{lem:Tensormaxprinciple}
	Let $(M, g, h)$ be any smooth metric measure space with finite $h$-volume. Suppose $T$ is a symmetric 2-tensor on some (tensor) bundle such that $|T|\in L^2(e^{-h}d\text{vol}_g)$ and, for some constant $\rho >0$,
	$$ \Delta_h T = \rho T + \Phi(T) \quad \text{with}\quad g(\Phi(T)(s),s) \leq 0  \ \text{for any section} \ s.$$
Then $T$ is nonnegative and $\ker (T)$ is invariant under parallel translation.
\end{lem}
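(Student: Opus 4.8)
The plan is to reduce the whole assertion to a scalar differential inequality for the smallest eigenvalue of $T$ and then invoke the Yau--Naber Liouville Theorem (Lemma \ref{lem:Yau-NaberLiouville}). Write $\lambda(x)$ for the smallest eigenvalue of $T$ at $x$; since $T$ is smooth, $\lambda(x)=\min_{|v|=1}g(T(x)v,v)$ is locally Lipschitz, and $|\lambda|\le|T|\in L^2(e^{-h}\,d\text{vol}_g)$. Fix a point $x_0$, choose a unit $\lambda(x_0)$-eigenvector $s_0$, and extend it to a unit vector field $s$ near $x_0$ by parallel transport along the radial geodesics issuing from $x_0$, so that $\nabla s(x_0)=0$ and $|s|\equiv1$. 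Then $\phi(x):=g(T(x)s(x),s(x))$ is a smooth upper support function for $\lambda$ at $x_0$ (that is, $\phi\ge\lambda$ near $x_0$ and $\phi(x_0)=\lambda(x_0)$), and a direct computation using $\nabla s(x_0)=0$, $g(\Delta s,s_0)(x_0)=-|\nabla s|^2(x_0)=0$, and the equation $\Delta_h T=\rho T+\Phi(T)$ gives
\begin{equation*}
\Delta_h\phi(x_0)=(\Delta_h T)(s_0,s_0)=\rho\,\lambda(x_0)+g(\Phi(T)(s_0),s_0)\le\rho\,\lambda(x_0),
\end{equation*}
the last inequality being the structural hypothesis on $\Phi$ applied to the minimal eigenvector $s_0$. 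Hence $\Delta_h\lambda\le\rho\lambda$ in the sense of barriers.

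Next I would prove $T\ge0$, equivalently $\lambda\ge0$. Put $w:=\max(-\lambda,0)$; it is locally Lipschitz, bounded below by $0$, and $w\le|\lambda|\in L^2(e^{-h}\,d\text{vol}_g)$. At any point where $w>0$ one has $\lambda<0$ in a neighborhood, so $w=-\lambda$ there and $-\phi$ is a smooth lower support function for $w$ at that point with $\Delta_h(-\phi)(x_0)\ge-\rho\lambda(x_0)=\rho\,w(x_0)>0$; at a point where $w=0$ the constant function $0$ is a lower support function with vanishing drift Laplacian. Thus $\Delta_h w\ge0$ in the sense of barriers on all of $M$, and $M$ has finite $h$-volume. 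By the Yau--Naber Liouville Theorem (Lemma \ref{lem:Yau-NaberLiouville}), $w$ is constant, say $w\equiv c\ge0$. If $c>0$ then $\lambda\equiv-c<0$ everywhere, so the first case applies at every point and forces $0=\Delta_h c\ge\rho c>0$, a contradiction. Therefore $c=0$, i.e. $\lambda\ge0$ and $T\ge0$.

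For the second conclusion, that $\ker T$ is parallel, I would run the elliptic strong maximum principle for tensors. Now that $T\ge0$, fix $p$ and $v\in\ker T(p)$, extend $v$ to $\tilde v$ by radial parallel transport, and set $f:=g(T\tilde v,\tilde v)\ge0$ with $f(p)=0$. Then $p$ is an interior minimum of the $C^2$ function $f$, so $\nabla f(p)=0$ and $\Delta_h f(p)\ge0$; on the other hand the computation above gives $\Delta_h f(p)=g(\Phi(T)(v),v)\le0$. Hence $\Delta_h f(p)=0$, and since $\nabla f(p)=0$ gives $\Delta_h f(p)=\tr\nabla^2f(p)$ with $\nabla^2f(p)\ge0$, we get $\nabla^2f(p)=0$: the function $f$ vanishes to second order at $p$. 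Propagating this second-order vanishing along geodesics through $p$ — which is exactly the mechanism of Hamilton's strong maximum principle for systems \cite{Ha:86} — yields that $\dim\ker T$ is locally constant and that $\ker T$ is invariant under parallel transport, and since $M$ is connected $\ker T$ is then a globally defined parallel subbundle. I expect this last step to be the main obstacle: the reduction to the scalar inequality and the use of Lemma \ref{lem:Yau-NaberLiouville} are routine once the barrier computation is in place, whereas making the strong-maximum-principle propagation rigorous (controlling the lower-order terms coming from $\nabla\tilde v$ and $\nabla^2\tilde v$ and closing the resulting ODE comparison) requires genuine care; the drift term $-\nabla h\cdot\nabla$ itself, however, creates no essential difficulty since that argument is purely local.
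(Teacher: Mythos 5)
Your treatment of the first conclusion is essentially correct and is the intended mechanism: the paper does not reprove this lemma (it is quoted from Petersen--Wylie \cite{Petersen-W:10}), but the whole point of pairing it with Lemma \ref{lem:Yau-NaberLiouville} is exactly the reduction you carry out. The upper support function $\phi=g(Ts,s)$ built from a radially parallel extension of a minimal unit eigenvector, the cancellation of the cross term (which, to be precise, uses both $g(\Delta s,s)(x_0)=-|\nabla s|^2(x_0)=0$ and the eigenvector identity $Ts_0=\lambda(x_0)s_0$ so that $T(\Delta s,s)(x_0)=\lambda(x_0)\,g(\Delta s,s_0)(x_0)=0$), the hypothesis on $\Phi$ applied at $s_0$, the two-case barrier argument for $w=\max(-\lambda,0)$, and the integrability $w\le|T|\in L^2(e^{-h}d\mathrm{vol}_g)$ together give a legitimate application of the Yau--Naber Liouville theorem, and the constancy-plus-contradiction step does force $T\ge 0$.

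The genuine gap is in the second conclusion, that $\ker T$ is parallel, and it is precisely the step you flag but do not close. What you actually prove is only that $f=g(T\tilde v,\tilde v)$ vanishes to second order at the single point $p$; second-order vanishing at one point does not propagate along geodesics by itself, and the appeal to Hamilton's strong maximum principle \cite{Ha:86} is not applicable as stated: that theorem is parabolic, formulated for time-dependent solutions of reaction--diffusion systems along the Ricci flow, whereas here $T$ satisfies a static drift-elliptic equation on an arbitrary smooth metric measure space $(M,g,h)$ --- there is no flow to run, and the zeroth-order term $\rho T+\Phi(T)$ with $\rho>0$ cannot simply be ignored. The relevant substitute is an elliptic (Bony-type) strong maximum principle, of the kind used by Brendle--Schoen \cite{Brendle-S:08}: one first shows from $\Delta_h\lambda_1\le\rho\lambda_1$ in the support sense, together with $\lambda_1\ge 0$, that the zero set of $\lambda_1$ is open (hence all of $M$ if nonempty), and one then needs a separate vector-bundle argument establishing that $\dim\ker T$ is constant and that the null distribution is invariant under parallel transport; none of this is carried out in your proposal. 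Nor is the difficulty merely bookkeeping: away from $p$ the extension $\tilde v$ is no longer a null eigenvector, so the lower-order terms $4\,\nabla T(\nabla\tilde v,\tilde v)$ and $2\,T(\nabla\tilde v,\nabla\tilde v)$ in $\Delta_h f$ have no sign and cannot be discarded, which is exactly why the propagation step is a theorem (the Bony/Hamilton null-space theorem) rather than a routine ODE comparison. As it stands, the proposal proves ``$T\ge 0$'' but not ``$\ker T$ is parallel.''
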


\bigskip
\section{The proof of Theorem \ref{thm:quadraticofC1} and Theorem \ref{thm:dichotomy}}

In this section, we shall prove Theorem \ref{thm:quadraticofC1}, Corollary \ref{thm:kahler},  and Theorem  \ref{thm:dichotomy}  as stated in the introduction.

First of all, we divide the proof of Theorem  \ref{thm:quadraticofC1} into proving the following two propositions; the first one applies not only to complete gradient shrinking Ricci solitons but also to complete {\it ancient solutions} of the Ricci flow. 

\begin{prop} \label{pro:positiveC1}
	Let $(M^4, g (t))$ be a 4-dimensional complete ancient solution.  

\smallskip
\begin{enumerate}
		\item[(a)]\footnote{We obtained this result in Spring 2018. In  \cite{Cho-Li:20}, Cho and Li observed
the same result, i.e., Proposition \ref{pro:positiveC1} \!(a), independently.}  If $g(t)$ has half nonnegative isotropic curvature, then either $A\geq 0$ or $C\geq  0$.

\smallskip
\item[(b)] If $g(t)$ has half positive isotropic curvature, then either $A>0$ or $C> 0$.  

\end{enumerate}
\end{prop}

\begin{proof} (a) Our proof will essentially follow a similar argument by B.-L. Chen in \cite{ChenBL:09}; see also the survey article by the first author \cite{Cao:10}.  

\smallskip
Suppose $g (t)$ is defined for $-\infty < t \leq T$ for some positive $T>0$. Without loss of generality, we may assume that matrix $C$ is weakly 2-positive, and $C_3 \ge C_2\ge C_1$ are the eigenvalues of $C$ so that $$C_1+C_2\geq 0 \qquad 
{\mbox{on}} \ M\times (-\infty, T].  $$  Then it follows that $C_3 \geq C_2 \geq 0$ on $M\times (-\infty, T]$ as well.
	
	Firstly, let us consider $g (t)$ on the finite time interval $[0,T]$. For any fixed point $x_0 \in M$, pick $r_0>0$ sufficiently small such that (for dimension $n=4$)
	\begin{equation} \label{Ricci bound}
|Rc|(x,t) \leq 3r_0^{-2}
\end{equation}
 for all $x\in B_t(x_0,r_0)$ and $t\in [0,T]$. Here, $B_t(x_0,r_0)$ denotes the geodesic ball of radius $r_0$ centered at $x_0$ and at time $t$. Then, for any constant $\alpha>
{40}Tr_0^{-2}+2$, we pick a constant $K_{\alpha}>0$ such that $C_1(x,0) \geq -K_{\alpha}$ on $B_0(x_0,\alpha r_0)$ at $t=0$.

We claim that there exists a universal constant $K_0>0$ such that
	\begin{equation} \label{eq:C1inequality}
		C_1(x,t) \geq \min \left\{ -\frac{2}{t+\frac{2}{K_{\alpha}}},-\frac{K_0}{\alpha^2 r_0^2} \right\},
	\end{equation}
	whenever $x\in B_t(x_0,\frac{3\alpha}{4}r_0)$ and $t\in[0,T]$.
	
	Indeed, let us take any smooth nonnegative, nonincreasing, cut-off function $\phi$ on $\R$, such that $\phi \equiv 1$ on $(-\infty,7/8]$ and $\phi\equiv 0$ on $[1,\infty)$, and consider the function
	$$ u(x,t) :=\phi \left( \frac{d_t(x_0,x)+5tr_0^{-1}}{\alpha r_0} \right)  C_1(x,t).$$
	Then, by direct computations, we obtain
	\begin{equation*}
		\begin{split}
			\left( \frac{\pa}{\pa t}-\Delta \right) u &= \frac{\phi^{\prime}C_1}{\alpha r_0} \left[ \left( \frac{\pa}{\pa t}-\Delta \right) d_t(x_0,x) + \frac{5}{r_0} \right] \\
			& \hspace{4mm} - \frac{\phi^{\prime \prime}C_1}{\alpha^2r_0^2} - 2\na \phi \cdot \na C_1 + \phi \left( \frac{\pa}{\pa t}-\Delta \right)C_1
		\end{split}		
	\end{equation*}
	at smooth points of the distance function $d_t(x_0,\cdot)$.
	
	Let $u_{\min}(t) = \min_M u(\cdot,t)$. If $u_{\min}(t_0) \leq 0$ for some $t_0$, and $u_{\min}(t_0)$ is achieved at some $x_1$ such that $ u(x_1,t_0) = u_{\min}(t_0) $, then $ \phi^{\prime}C_1(x_1,t_0) \geq 0 $. On the other hand, by (\ref{Ricci bound}) and Lemma 8.3(a) of Perelman \cite{Perelman:02} (see also Lemma 3.4.1 in \cite{Cao-Zhu:06}), 
	\begin{equation*} \label{eq:Perelman}
		\left( \frac{\pa}{\pa t}-\Delta \right) d_t(x_0,x) \geq -\frac{5}{r_0},
	\end{equation*}
	whenever $d_t(x_0,x) > r_0$, in the sense of support functions. Then, as long as $u_{\min}(t_0) \leq 0$, by applying the maximum principle and standard support function technique, we get
	\begin{equation*}
		\begin{split}			
			\left.\frac{d^-}{dt} \right|_{t=t_0} u_{\min} &{\ :=}\liminf_{h\searrow 0} \frac{u_{\min}(t_0+h) - u_{\min}(t_0)}{h}   \\
& \geq - \frac{\phi^{\prime \prime}C_1}{\alpha^2r_0^2} - 2\na \phi \cdot \na C_1 + \phi \left( \frac{\pa}{\pa t}-\Delta \right)C_1  \\
			& \geq - \frac{\phi^{\prime \prime}C_1}{\alpha^2r_0^2} +\frac{2 \phi^{\prime 2}C_1}{\alpha^2r_0^2 \phi} + 2\phi (C_1^2 + 2C_2C_3) \\
			& \geq \frac{C_1}{\alpha^2r_0^2} \left(  \frac{2\phi^{\prime 2}}{\phi} - \phi^{\prime \prime} \right) +2\phi C_1^2,
		\end{split}
	\end{equation*}
	where we have used the fact that $C_3 \geq C_2\geq 0$ in the last inequality. Therefore, it follows from 
	$$|2\phi^{\prime 2}/\phi - \phi^{\prime \prime}| \leq K_0\sqrt{\phi}$$ for some universal constant $K_0>0$ and the Cauchy-Schwarz inequality
	$$  \left| \frac{C_1K_0\sqrt{\phi}}{\alpha^2r_0^2} \right| \leq  \phi C_1^2 + \frac{K_0^2}{4 \alpha^4r_0^4} $$
	that
	\begin{equation*}
		\left.\frac{d^-}{dt} \right|_{t=t_0} u_{\min} \geq \frac{1}{2} u_{\min}^2(t_0) + \left( \frac{1}{4} u_{\min}^2(t_0) - \frac{K_0^2}{4 \alpha^4 r_0^4} \right) ,
	\end{equation*}
	provided $u_{\min}(t_0) \leq 0$. If $u_{\min} \leq -\frac{K_0}{\alpha^2 r_0^2}$, then we have
	$$ \left.\frac{d^-}{dt} \right|_{t=t_0} u_{\min} \geq \frac{1}{2} u_{\min}^2(t_0). $$
	Hence, by integrating the above inequality from $0$ to $t$, we get
	$$   u_{\min}(t) \geq \min \left\{ -\frac{2}{t+\frac{2}{K_{\alpha}}},-\frac{K_0}{\alpha^2 r_0^2} \right\}, \quad \text{whenever}\ x\in B_t(x_0, \frac{3\alpha}{4}r_0).$$
	This proves the claim, hence estimate (\ref{eq:C1inequality}) holds for $C_1(x, t)$ on $B_t(x_0,\frac{3\alpha}{4}r_0) \times  [0,T]$. 
	
	Now we consider $g(t)$ as an ancient solution on $M^4$. For any fixed $t\in (-\infty,T]$, we apply the above arguments on the interval $[s,t]$ with $s<t$ by translating the initial time of $s$ to $0$. Then, by (\ref{eq:C1inequality}), we have
	\begin{equation*}
		C_1(x,t) \geq \min \left\{ -\frac{2}{t-s+\frac{2}{K_{\alpha}}},-\frac{K_0}{\alpha^2 r_0^2} \right\},\quad \text{whenever}\ x\in B_t(x_0, \frac{3\alpha}{4}r_0).
	\end{equation*}
	For any fixed $t$, by taking $\alpha \rightarrow \infty$ and then $s \rightarrow -\infty $, we obtain $C_1\geq 0$ for any ancient solution on $M^4$. 
	
(b) Without loss of generality, we again assume the matrix $C$ is 2-positive, i.e., $$C_1+C_2> 0 \qquad 
{\mbox{on}} \ M\times (-\infty, T].  $$  Then it follows that $C_3 \geq C_2 > 0$ on $M\times (-\infty, T]$. 

We shall prove $C_1>0$ by contradiction. Assume $C_1(x_0,t_0) = 0$ at some point $(x_0,t_0)$. Then $C_1$ attains its minimum at $(x_0,t_0)$. Let $\eta\in \wedge_{x_0}^{-}(M)$ be a null eigenvector of $C$ such that, at $(x_0,t_0)$, $C(\eta,\eta)=C_1 (x_0,t_0)=0$. Extend $\eta$ to a local section (also denoted by $\eta$) in space and time by parallel translating along geodesics emanating from $x_0$ and independent of $t$. Then, at $(x_0,t_0)$, we have 

\begin{equation} \label{eq:e4}
	\begin{split}
		0 & \geq (\pa_t - \Delta)C_1 \\
               & \geq (\pa_t - \Delta) C(\eta, \eta) \\
		&= [(\pa_t - \Delta)C](\eta, \eta) \\
                & = 2(C^2+B^tB+2C^{\sharp})(\eta, \eta) \\
		&\geq 2C_1^2 + 2B_1^2 + 4C_2C_3 \\
		&>0
	\end{split}
\end{equation}
in the barrier sense. This is a contradiction. 
Hence $C_1>0$  on $M^4$. 

This completes the proof of Proposition \ref{pro:positiveC1}.
\end{proof}

Our second proposition only applies to $4$-dimensional complete noncompact gradient shrinking Ricci solitons. 

\begin{prop} \label{prop:C1K/f}
	Let $(M^4,g,f)$ be a 4-dimensional complete noncompact gradient shrinking Ricci soliton with half positive isotropic curvature, then  either $A_1 \geq \frac{K}{f}$ or $C_1 \geq \frac{K}{f}$ for some constant $K>0$ depending only on the geometry on a fixed large geodesic ball.
\end{prop}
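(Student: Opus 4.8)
The plan is to run a weighted maximum principle argument for the function $fC_1$ on the soliton itself; the crucial point is an elementary algebraic inequality which turns the a priori unfavorable cross term in the evolution equation for $C$ into a favorable one. First I would reduce to the case where the matrix $C$ (rather than $A$) is $2$-positive; then, applying Proposition \ref{pro:positiveC1}(b) to the canonical complete ancient Ricci flow attached to the soliton (which has half PIC at all times by Hamilton's theorem), we get $C_1>0$ on $M$, so $0<C_1\le C_2\le C_3$. Since half PIC excludes the Gaussian soliton, Lemma \ref{lem:Chen} together with the Pigola--Rimoldi--Setti remark gives $R>0$; hence $f>0$, $f$ is proper (Lemma \ref{lem:Cao-Zhou}), $|\na f|^2=f-R$ and $\Delta_f f=2-f$ (Lemma \ref{lem:Hamilton}), and at a minimum $p$ of $f$ one has $f(p)=R(p)\le 2$. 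The desired estimate $C_1\ge K/f$ is equivalent to $\inf_M(fC_1)>0$.

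The two ingredients are the following. \emph{(Algebra.)} Since $\tr C=R/4$ and $C_1$ is the least of the three eigenvalues of $C$, we have $C_1\le\tfrac13\tr C=\tfrac{R}{12}$; and because $(C_2-C_1)(C_3-C_1)\ge0$ we get $C_2C_3\ge C_1(C_2+C_3-C_1)=C_1\big(\tfrac R4-2C_1\big)\ge\tfrac{C_1R}{12}$. \emph{(Analysis.)} Let $q$ be a local minimum of $fC_1$, let $v_0$ be a unit $C_1$-eigenvector of $C(q)$, extend it by parallel transport along geodesics from $q$ to a unit vector field $V$, and set $\psi:=f\,C(V,V)$. Then $\psi\ge fC_1$ with equality at $q$, so $\psi$ has a local minimum at $q$; since $\na V(q)=0$ and $\Delta V(q)=0$, one has $\Delta_f\big(C(V,V)\big)(q)=[\Delta_f C](v_0,v_0)(q)$, and using Lemma \ref{lem:ellieqofsolitons} (i.e. $\Delta_f C=C-2(C^2+2C^\sharp+B^tB)$), together with $\Delta_f f=2-f$, $|\na f|^2=f-R$, and $\na\psi(q)=0$, a direct computation gives
$$0\ \le\ \Delta_f\psi(q)\ =\ \frac{2C_1R}{f}-2fC_1^2-4fC_2C_3-2f\,(B^tB)(v_0,v_0)\ \le\ \frac{2C_1R}{f}-4fC_2C_3\ \le\ C_1R\Big(\frac{2}{f}-\frac{f}{3}\Big),$$
the last step by the algebraic bound. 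Since $C_1R>0$ at $q$, this forces $f(q)^2\le 6$. In other words, $fC_1$ has \emph{no} local minimum in the noncompact region $\{f>\sqrt{6}\}$.

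To finish, I would turn this into a global lower bound without any cutoff. As $f_{\min}\le2<\sqrt{6}$ and $f$ is proper, $\{f=\sqrt{6}\}$ is a nonempty compact hypersurface with $m:=\min_{\{f=\sqrt{6}\}}(fC_1)>0$. If $fC_1(y_0)<m$ for some $y_0$ with $f(y_0)>\sqrt{6}$, choose $\sqrt{6}<L_1<L_2$ with $L_1>f(y_0)$: the minimum of $fC_1$ over the compact set $\Omega_{L_1}=\{\sqrt{6}\le f\le L_1\}$ is attained at a point $q_1$ with $f(q_1)=L_1$ (it cannot be an interior minimum, by the previous step, and it cannot lie on $\{f=\sqrt{6}\}$ since $fC_1(q_1)\le fC_1(y_0)<m$); but then $q_1$ is an interior point of $\Omega_{L_2}$ realizing $\min_{\Omega_{L_2}}(fC_1)$, hence a local minimum of $fC_1$ on $M$ with $f(q_1)>\sqrt{6}$ --- a contradiction. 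Thus $fC_1\ge m$ on $\{f\ge\sqrt{6}\}$, and combined with $fC_1>0$ on the compact set $\{f\le\sqrt{6}\}$ this yields $C_1\ge K/f$ with $K:=\min_{\{f\le\sqrt{6}\}}(fC_1)>0$, depending only on the geometry of $g$ on this bounded set.

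The conceptual point --- and where the proof could go wrong if done naively --- is that the seemingly dangerous term $-4fC_2C_3$ in the computation above is in fact the helpful one: it is $\ge\tfrac13 fC_1R$ by the algebra step, which for $f$ large dominates the competing good-sign term $\tfrac{2C_1R}{f}$, so curvature is \emph{not} required to decay. The only genuinely technical point is the maximum principle for the merely Lipschitz eigenvalue $C_1$ at $q$, which is handled through the smooth upper barrier $\psi$ (using $\na V(q)=0$, $\Delta V(q)=0$) exactly as in the proof of Proposition \ref{pro:positiveC1}(b).
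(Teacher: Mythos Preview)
Your computation is correct and elegant: the algebraic lower bound $C_2C_3\ge \tfrac{C_1R}{12}$ (from $C_1\le R/12$ and $(C_2-C_1)(C_3-C_1)\ge 0$) together with the barrier calculation does show that $fC_1$ has no local minimum in $\{f>\sqrt6\}$. But the final deduction fails. The assertion ``$q_1$ is an interior point of $\Omega_{L_2}$ realizing $\min_{\Omega_{L_2}}(fC_1)$'' is unjustified: the minimum over the larger set $\Omega_{L_2}$ may well be strictly smaller than $fC_1(q_1)$ and sit on the outer boundary $\{f=L_2\}$. Repeating the argument only pushes the minimizer further out without producing a contradiction. The underlying issue is that ``no interior local minimum'' does not by itself yield a lower bound absent control at infinity --- think of a strictly decreasing positive function on $[\sqrt6,\infty)$, which has no local minimum yet tends to zero. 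You have no a priori information on $\liminf_{x\to\infty}(fC_1)$, so the compactness step does not close.

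The paper avoids this by choosing a comparison function whose sign at infinity \emph{is} known. It uses only the crude consequence $\Delta_f C_1\le C_1$ of Lemma~\ref{lem:ellieqofsolitons} (which your computation refines), sets $u:=C_1-af^{-1}-naf^{-2}$ with $a=\inf_{\partial B_p(r_0)}C_1>0$, and checks $\Delta_f u\le u$ on $M\setminus B_p(r_0)$. Since $C_1>0$ everywhere while $af^{-1}+naf^{-2}\to 0$, one has $\liminf_{x\to\infty}u\ge 0$ automatically; thus if $u$ were negative somewhere it would attain a negative interior minimum, contradicting $0\le\Delta_f u\le u<0$. Your sharper inequality for $fC_1$ could be made to work, but you must either (i) repackage it using a test function with controlled behavior at infinity (e.g.\ work with $C_1-K f^{-1}$ rather than $fC_1$, exploiting $C_1>0$ to get $\liminf\ge0$), or (ii) invoke an Omori--Yau type maximum principle, which would require curvature bounds you have not established.
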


\begin{proof} By Proposition \ref{pro:positiveC1}, we know that either $A>0$ or $C>0$.  Again, without loss of generality, we 
may assume $C>0$. We shall use a similar argument as in Chow-Lu-Yang \cite{CLY11}. 

By Lemma \ref{lem:ellieqofsolitons} and the half PIC assumption, we have $\Delta_f C_1 \leq C_1$ in the barrier sense. 
Now, for any fixed point $p\in M^4$, we consider the geodesic ball $B_p(r_0)$ of radius $r_0$ centered at $p$,  for some $r_0>0$ to be chosen later. Define
	$$ a:= \inf_{\pa B_p(r_0)}C_1 >0,$$
	and
	$$ u:=C_1 - af^{-1} -4af^{-2}. $$
	Then, on one hand, we have $u>0$ on $\pa B_p(r_0)$  for $r_0$ sufficiently large. On the other hand, as
	\begin{eqnarray*}
		\Delta_{f} (f^{-1}) &=& -\Delta_f(f)f^{-2} + 2|\na f|^2f^{-3} \\
		&=& (f-2)f^{-2} + 2|\na f|^2f^{-3} \\
		&\geq& f^{-1} -2f^{-2},
	\end{eqnarray*}
	and
	\begin{eqnarray*}
		\Delta_{f} (f^{-2}) &=& 2(f-2)f^{-3} + 6|\na f|^2f^{-4} \\
		&\geq& \frac{3}{2}f^{-2},
	\end{eqnarray*}
	on $M\setminus B_p(r_0)$, we have
	\begin{eqnarray*}
		\Delta_f u &\leq& C_1 - af^{-1} + 2af^{-2} - 6af^{-2} \\
		&=& u.
	\end{eqnarray*}
	
	We claim that $u\geq 0$ on $M\setminus B_p(r_0)$. If not, then there exists a point $x_0 \in M\setminus B_p(r_0)$ such that $u(x_0)<0$. Since $u>0$ on $\pa B_p(r_0)$ and $u\geq 0$ at infinity, we know that $u$ achieves its negative minimum at some point $p_0$ in the interior of $M\setminus B_p(r_0)$. Thus, by the maximum principle, at the point $p_0$, we have $0\leq \Delta_f u\leq u<0$, which is a contradiction.
	
	Therefore $u\geq 0$ on $M\setminus B_p(r_0)$, and there exists some constant $K$ such that $C_1\geq \frac{K}{f}$ on $M$. This completes the proof of  Proposition \ref{prop:C1K/f} and concludes the proof of Theorem \ref{thm:quadraticofC1}.
\end{proof}

As an application of  Theorem \ref{thm:quadraticofC1},  we now prove Corollary \ref{thm:kahler}. 
\begin{proof}  Let $(M^4,g,f)$ be a complete gradient shrinking K\"ahler-Ricci soliton of complex dimension two with nonnegative isotropic curvature. The key point is to show 
that $(M^4,g,f)$ must have nonnegative curvature operator $Rm\ge 0$. 

On one hand, by Lemma \ref{lem:ellieqofsolitons} and decomposition \eqref{eq:RmforKahlersurfaces} of the curvature operator $Rm$ for K\"ahler surfaces, we have
\begin{equation*}
	\Delta_f Rm =Rm - 2(Rm^2+Rm^{\sharp}) =Rm - 2\left( Rm^{2}+ 
	2\left(\begin{array}{cc}
		0 & 0 \\
		0 & C^{\#}
	\end{array}\right)
	\right).
\end{equation*}
Since $C\ge 0$ by Theorem \ref{thm:quadraticofC1}, it follows that the transpose of its adjoint $C^{\#}\geq 0$.  
 Hence $Rm^2+Rm^{\sharp} \geq 0$. 
On the other hand, note that
$$ |Rm|^2 \leq 2(|A|^2 + |C|^2 + |B|^2). $$
Moreover, nonnegative isotropic curvature implies that $|C|^2 \leq \frac{3}{16}R^2$, while the K\"ahler condition implies $|A|^2 = \frac{1}{16}R^2$. Therefore, together with identity (\ref{s2l2.7e1}), i.e., $|B|^2 = \frac{1}{4}|\trc|^2$, we have
$$ |Rm|^2 \leq \frac{5}{2}|Rc|^2. $$
Now, it follows that $Rm\geq 0$ by Lemma \ref{lem:M-S} and Lemma \ref{lem:Tensormaxprinciple}.

Now the classification (under half nonnegative isotropic curvature condition) follows from the work of Munteanu-Wang \cite{M-W:17} (Corollary 4) on shrinking Ricci solitons with nonnegative curvature operator, or from the work of  Ni \cite{Ni:05} on K\"{a}hler-Ricci solitons with nonnegative bisectional curvature. In addition, if $(M^4, g, f)$ has half PIC, then clearly $\C^2$, $\CP^1 \times \C$ and $ \CP^1 \times \CP^1$ are excluded. This finishes the proof of Corollary \ref{thm:kahler}.
\end{proof}

Next, we prove {\bf Theorem \ref{thm:dichotomy}}.

\begin{proof} Let $(M^4, g, f)$ be a 4-dimensional complete gradient shrinking Ricci soliton with half nonnegative isotropic curvature. Then, by Theorem \ref{thm:quadraticofC1}, we have either $A\ge 0$ or $C\ge 0$. Without loss of generality, we may assume $A\ge 0$.  

First of all, we claim that $\ker (A)$ is invariant under parallel translation.  Indeed, since $A\geq 0$ and its three eigenvalues $0\leq A_1\leq A_2\leq A_3$ has the sum $A_1+A_2+A_3=\frac{R}{4}$, we get $|A|^2\leq \frac {1}{16}{R^2}$. Combining this with Lemma \ref{lem:ellieqofsolitons} and Lemma \ref{lem:M-S}, and applying Lemma \ref{lem:Tensormaxprinciple}, we conclude that $\ker(A)$ is invariant under parallel translation.

Next, we show that if the holonomy group $\text{Hol}^{0} (M^4, g)$ is $\text{SO}(4)$, then $A>0$ hence $(M^4, g, f)$ has half PIC. We argue by contradiction. Suppose that there exist a point $p\in M^4$ and a self-dual bivector $\vphi_1 \in \wedge_{p}^+(M)$ such that
	$$ A (\vphi_1,\vphi_1) = 0. $$
	It is then clear that $\vphi_1$ is a null eigenvector corresponding to the smallest eigenvalue $A_1=0$. Now it is an elementary fact that, in dimension $n=4$, any self-dual 2-form $\vphi_1 \in \wedge_{p}^+(M)$ can be expressed as 
	\begin{equation*} \label{eq:vphi1}
		\vphi_1 = \frac{1}{\sqrt{2}}\left( e_1\wedge e_2 + e_3\wedge e_4\right)
	\end{equation*}
for some positively oriented orthonormal frame $\{e_1, e_2, e_3, e_4\}$ at $p$; see Lemma 6.1 in \cite{Derdzinski:00}. 
	Meanwhile, suppose $\vphi_3\in \wedge_{p}^+(M)$ is an eigenvector corresponding to the largest eigenvalue $A_3$. Then, using Lemma 6.1 in \cite{Derdzinski:00} again, we can find another positively oriented orthonormal frame $\{v_1, v_2, v_3, v_4\}$ at $p$ such that
	\begin{equation*} \label{eq:vphi3}
		\vphi_3 = \frac{1}{\sqrt{2}}\left( v_1\wedge v_2 + v_3\wedge v_4\right). 
	\end{equation*}
	Since $\text{Hol}^{0} (M^4, g)=\text{SO}(4)$, there exists a closed loop $\gamma$ based at $p$ such that 
	$$ v_i = P_{\gamma}e_i,\quad i=1, \cdots, 4,  $$
	where $P_{\gamma}$ denotes the parallel transport along $\gamma$. It then follows that $A_3=A(\vphi_3,\vphi_3)$ $= A(\vphi_1,\vphi_1)=A_1=0 $, since $\ker (A)$ is invariant under parallel translation. This would imply that the scalar curvature $R=4(A_1+A_2+A_3)=0$ at $p$. Then, by Remark \ref{rmk:R>0}, $(M^4,g,f)$ would be isometric to the Gaussian shrinking soliton on $\R^4$ which is a contradiction to the assumption that $\text{Hol}^{0} (M^4, g)=\text{SO}(4)$.
	
Now, if $(M^4,g,f)$ is locally reducible, then it must be either the Gaussian soliton on $\R^4$ or a finite quotient of either $\rS^2 \times \rS^2$, or $\rS^2 \times \R^2$, or  $\rS^3 \times \R$. On the other hand, if $(M^4,g,f)$ is irreducible and (locally) symmetric, then it must be of compact type because the scalar curvature $R>0$. But then $(M^4,g)$ must be either $\rS^4$ or $\CP^2$. Finally, if $(M^4,g,f)$ is irreducible and not isometric to a symmetric space then, by Berger's holonomy classification theorem, either $\text{Hol}^{0} (M^4, g)=\text{SO}(4)$ or $\text{Hol}^{0} (M^4, g)=\text{U}(2)$. If $\text{Hol}^{0} (M^4, g)=\text{U}(2)$, then $(M^4,g, f)$ is a gradient shrinking K\"ahler-Ricci soliton. On the other hand, if $\text{Hol}^{0} (M^4, g)=\text{SO}(4)$ then, from the above, we know that $(M^4,g,f)$ must have half PIC.   

This completes the proof of Theorem \ref{thm:dichotomy}.
\end{proof}

\section{The proof of Theorem \ref{thm:main}}

In this section, we prove Theorem \ref{thm:main} as stated in the introduction. By the half PIC (or half NNIC) assumption, without loss of generality, we may assume that the matrix $C$ is 2-positive (or weakly 2-positive), i.e., $C_1+C_2>0$ (or $C_1+C_2\geq0$).

We start by deriving a key differential inequality which will be used in the proof of Theorem \ref{thm:main}. 

\begin{lem} \label{lem:inequalityof|C|/R_1}
	Let $(M^4,g,f)$ be a 4-dimensional complete gradient shrinking Ricci soliton satisfying Eq. (\ref{eq:Riccishrinker}) and with $R>0$. Then,
	\begin{equation*}
\begin{split}
		\Delta_F\frac{|C|}{R} & \geq \frac{2|C|}{R^2}|Rc|^2 - \frac{1}{R|C|}\la 2(C^2 + B^tB + 2C^{\sharp}), C \ra\\
& \geq \frac{2}{R^2|C|}\left( \left( \frac{1}{4}R^2 \sum_{i=1}^{3}c_i^2 - 3R\sum_{i=1}^{3}c_i^3 \right) + 4\left( \sum_{i=1}^{3}c_i^2 \right) \left( \sum_{i=1}^{3} \tilde{b}_i^2 \right) - R \sum_{i=1}^{3}c_i\tilde{b}_i^2 \right), 
\end{split}	
\end{equation*}
	where $F = f - 2\log R$ and $\Delta_F = \Delta - \la \na F, \na \ra$.
\end{lem}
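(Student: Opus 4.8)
The plan is to compute $\Delta_F(|C|/R)$ directly using the Bochner-type formulas from Lemma \ref{lem:ellieqofsolitons} for $\Delta_f C$ and $\Delta_f R$, and then estimate the resulting terms. First I would record the basic rules: for a positive function $h$ and a tensor $T$ with $|T|>0$, one has $\Delta_f(|T|) \ge \frac{1}{|T|}\langle \Delta_f T, T\rangle - \frac{1}{|T|}\,|\nabla|T||^2 + \frac{1}{|T|}|\nabla T|^2 \ge \frac{1}{|T|}\langle \Delta_f T, T\rangle$ by Kato's inequality, and $\Delta_f(1/R) = -R^{-2}\Delta_f R + 2R^{-3}|\nabla R|^2$. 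Combining these for the product $|C|\cdot R^{-1}$ and using the product rule $\Delta_f(uv) = u\,\Delta_f v + v\,\Delta_f u + 2\langle\nabla u,\nabla v\rangle$, the cross term $2\langle \nabla|C|, \nabla R^{-1}\rangle = -2R^{-2}\langle\nabla|C|,\nabla R\rangle$ will need to be absorbed; the point of choosing $F = f - 2\log R$ rather than $f$ is precisely that $\Delta_F = \Delta_f + 2\langle\nabla\log R,\nabla\,\cdot\,\rangle$, which after the $1/R$ factors converts the cross term into exactly the gradient correction that makes things work. I expect the clean identity $\Delta_F(|C|/R) = \frac{|C|}{R}\big(\text{stuff}\big)$ to fall out with the $|\nabla R|^2$ terms cancelling, leaving $\frac{1}{R|C|}\langle \Delta_f C, C\rangle - \frac{|C|}{R^2}\Delta_f R$ plus nonnegative Kato leftovers that we discard.

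Next I would substitute $\Delta_f R = R - 2|Rc|^2$ and $\Delta_f C = C - 2(C^2 + B^tB + 2C^\sharp)$. The ``linear'' pieces: $-\frac{|C|}{R^2}(R - 2|Rc|^2) = -\frac{|C|}{R} + \frac{2|C|}{R^2}|Rc|^2$, and $\frac{1}{R|C|}\langle C - 2(C^2+B^tB+2C^\sharp), C\rangle = \frac{|C|}{R} - \frac{1}{R|C|}\langle 2(C^2+B^tB+2C^\sharp),C\rangle$. The two $\pm|C|/R$ terms cancel, giving exactly the first displayed lower bound
\[
\Delta_F \frac{|C|}{R} \ge \frac{2|C|}{R^2}|Rc|^2 - \frac{1}{R|C|}\langle 2(C^2+B^tB+2C^\sharp), C\rangle .
\]

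For the second inequality I would pass to the eigenvalue description. With $C = \frac{R}{12}I + W^-$ having eigenvalues $\frac{R}{12}+c_i$ and $\sum c_i = 0$, I need: $|C|^2 = \sum(\frac{R}{12}+c_i)^2 = \frac{R^2}{48} + \sum c_i^2$; $|Rc|^2 = \frac{R^2}{4} + |\mathring{Rc}|^2 = \frac{R^2}{4} + 2\sum\tilde b_i^2$ using identity \eqref{s2l2.7e1}; $\langle C^2, C\rangle = \sum(\frac{R}{12}+c_i)^3$, which expands via $\sum c_i = 0$ and is simplified using \eqref{s2l2.7e2} (the relation $\sum c_i^3 + 6c_1c_2c_3 = 3\sum c_i^3$, i.e. $c_1c_2c_3 = \frac{1}{3}\sum c_i^3$, together with $6\det W^- = 6c_1c_2c_3$); $\langle C^\sharp, C\rangle = 3\det C$ expanded similarly; and $\langle B^tB, C\rangle$, which in a Ricci-diagonalizing frame (so $B$ diagonal) becomes $\sum(\frac{R}{12}+c_i)\tilde b_i^2$. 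Plugging all of these in, multiplying through by $\frac{|C|}{R}$ to clear the $1/|C|$, and carefully collecting the coefficients of $R^2$, $R$, and $R^0$ should reproduce
\[
\frac{2}{R^2|C|}\left( \Big(\tfrac14 R^2\sum c_i^2 - 3R\sum c_i^3\Big) + 4\Big(\sum c_i^2\Big)\Big(\sum\tilde b_i^2\Big) - R\sum c_i\tilde b_i^2 \right).
\]

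The main obstacle will be the bookkeeping in this last step: correctly expanding the cubic expressions $\sum(\frac{R}{12}+c_i)^3$ and $3\det C$, invoking \eqref{s2l2.7e2} to eliminate $c_1c_2c_3$ in favor of $\sum c_i^3$, and tracking how the $\frac{R}{12}$-shift interacts with the $|Rc|^2 = \frac{R^2}{4} + 2\sum\tilde b_i^2$ term and with $\langle B^tB,C\rangle$ so that the $R^2$-coefficient collapses to the stated $\frac14\sum c_i^2$ and the lower-order Weyl–Ricci cross terms land as $4(\sum c_i^2)(\sum\tilde b_i^2) - R\sum c_i\tilde b_i^2$. The only subtlety beyond algebra is justifying that the differential inequality holds in the barrier sense where $|C|$ or $R$ may fail to be smooth — here $R>0$ by Lemma \ref{lem:Chen} and the remark after it (since $M$ is non-trivial), and $|C| > 0$ may be arranged, or else one works with smooth support functions for the largest eigenvalue block as usual.
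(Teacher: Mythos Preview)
Your approach matches the paper's proof exactly: Kato's inequality for $\Delta_f|C|$, the product rule, the observation that $F = f - 2\log R$ absorbs the gradient cross terms, substitution of the Hamilton identities to get the first inequality, and then the eigenvalue expansion using \eqref{s2l2.7e1}--\eqref{s2l2.7e2} for the second. Two small slips to watch when you execute the algebra: from \eqref{s2l2.7e1} one has $|\mathring{Rc}|^2 = 4\sum\tilde b_i^2$ (not $2\sum\tilde b_i^2$), and for $\langle B^tB, C\rangle = \sum(\tfrac{R}{12}+c_i)\tilde b_i^2$ you only need $C$ diagonal (with $\tilde b_i^2 := (B^tB)_{ii}$ in that basis), not $B$ itself diagonal---the Ricci-diagonalizing frame on $TM$ and the $C$-diagonalizing frame on $\wedge^-$ are independent choices.
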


\begin{proof}
	First of all, by direct computations, we have
	\begin{equation} \label{3.l1e1}
		\Delta_f \frac{|C|}{R} = \frac{1}{R}\Delta_f |C| - \frac{|C|}{R^2}\Delta_f R - \frac{2}{R^2} \la \na |C|, \na R \ra  + \frac{2|C|}{R^3} \la \na R, \na R \ra.
	\end{equation}
	On the other hand, using Kato's inequality, we get
	\begin{equation} \label{3.l1e2}
		\begin{split}
			\Delta_f |C| &= \frac{1}{2|C|} \Delta_f |C|^2 - \frac{1}{|C|} |\na |C||^2 \\
			&= \frac{1}{2|C|} \left( 2\la \Delta_f C, C \ra + 2 \la \na C, \na C \ra \right) - \frac{1}{|C|} |\na |C||^2 \\
			&= \frac{1}{|C|} \la \Delta_f C, C \ra + \frac{1}{|C|} \left( |\na C|^2 - |\na |C||^2  \right) \\
			&\geq \frac{1}{|C|} \la \Delta_f C, C \ra.
		\end{split}		
	\end{equation}
	Substituting \eqref{3.l1e2} into \eqref{3.l1e1} and using Lemma \ref{lem:ellieqofsolitons}, we obtain
	\begin{equation*}
		\begin{split}
			\Delta_F \frac{|C|}{R} &\geq \frac{1}{R|C|} \la \Delta_f C, C \ra - \frac{|C|}{R^2}\Delta_f R \\
			&= \frac{1}{R|C|} \la C - 2(C^2 +B^tB +2C^{\sharp}), C \ra - \frac{|C|}{R^2}\left( R - 2|Rc|^2\right) \\
			&= \frac{2|C|}{R^2}|Rc|^2 - \frac{1}{R|C|} \la 2(C^2 +B^tB +2C^{\sharp}), C \ra,
		\end{split}
	\end{equation*}
	where $F = f - 2\log R$.
This proves the first inequality in Lemma \ref{lem:inequalityof|C|/R_1}. 	

	Moreover, by the first inequality we just proved and the diagonalization of the matrices $A$ and $C$, we have
	\begin{equation*}
		\begin{split}
			\Delta_F\frac{|C|}{R} &\geq \frac{2|C|}{R^2}|Rc|^2 - \frac{1}{R|C|}\la 2(C^2 + B^tB + 2C^{\sharp}), C \ra \\
			& = \frac{2}{R^2|C|}\left( \frac{1}{4}|C|^2R^2 + |C|^2|\trc|^2 - R\left( \tr(C^3) + \tr(CB^tB) + 2\tr(C^{\sharp}C)  \right)   \right) \\
			&= \frac{2}{R^2|C|}\Bigg( \frac{1}{192}R^4 + \frac{1}{4}R^2\sum_{i=1}^{3}c_i^2 + \frac{1}{48}R^2\sum_{i=1}^{4}\lambda_i^2 + \left( \sum_{i=1}^{3}c_i^2 \right)\left( \sum_{i=1}^{4}\lambda_i^2 \right) \\
			&\hspace{1.5cm}- \frac{3}{12^3}R^4 - \frac{1}{4}R^2\sum_{i=1}^{3}c_i^2 - R\sum_{i=1}^{3}c_i^3 - \frac{1}{48}R^2\sum_{i=1}^{4}\lambda_i^2 \\
			&\hspace{1.5cm}- R\sum_{i=1}^{3}c_i\tilde{b}_i^2 - \frac{6}{12^3}R^4 + \frac{1}{4}R^2\sum_{i=1}^{3}c_i^2 - 6Rc_1c_2c_3	\Bigg).
		\end{split}
	\end{equation*}
	Using (\ref{s2l2.7e2}) and after some cancellations, we obtain
	\begin{equation*}
		\begin{split}
			\Delta_F\frac{|C|}{R} &\geq \frac{2}{R^2|C|}\left( \frac{1}{4}R^2\sum_{i=1}^{3}c_i^2 - 3R\sum_{i=1}^{3}c_i^3 + \left( \sum_{i=1}^{3}c_i^2 \right)\left( \sum_{i=1}^{4}\lambda_i^2 \right) - R\sum_{i=1}^{3}c_i\tilde{b}_i^2 \right) \\
			&= \frac{2}{R^2|C|}\left( \frac{1}{4}R^2\sum_{i=1}^{3}c_i^2 - 3R\sum_{i=1}^{3}c_i^3 + 4\left( \sum_{i=1}^{3}c_i^2 \right)\left( \sum_{i=1}^{3}\tilde{b}_i^2 \right) - R\sum_{i=1}^{3}c_i\tilde{b}_i^2 \right),
		\end{split}		
	\end{equation*}
	where we have used (\ref{s2l2.7e1}) in the last equality.
\end{proof}

Next, we recall an algebraic inequality derived by Li-Ni-Wang \cite{Li-Ni-Wang:18}. Since, in the proof of Theorem \ref{thm:main}, we shall need the equality case that was not stated in \cite{Li-Ni-Wang:18}, we  also include a proof here for the reader's convenience.

\begin{lem} \textup{\bf (Li-Ni-Wang \cite{Li-Ni-Wang:18})} \label{lem:algebraicidentityofRanda}
	Let $(M^4,g,f)$ be a 4-dimensional complete gradient shrinking Ricci soliton with half nonnegative isotropic curvature, then
$$ \frac{1}{4}R^2 \sum_{i=1}^{3}c_i^2 - 3R\sum_{i=1}^{3}c_i^3 \geq 0. $$		
Moreover, the equality holds if and only if either $c_i = 0$ for all $i=1,2,3$, or $c_3=\frac{R}{6}$ and $c_1=c_2=-\frac{R}{12}$.
\end{lem}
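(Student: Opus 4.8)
The plan is to prove this as a purely pointwise algebraic inequality in the three eigenvalues $c_1\le c_2\le c_3$ of $W^-$, using only three inputs: that $W^-$ is trace free, so $c_1+c_2+c_3=0$ (which also makes $c_3\ge 0$ automatically, being the largest of three reals summing to zero); that $R\ge 0$ by Lemma \ref{lem:Chen}; and that the half NNIC hypothesis, in the running normalization $C_i=\tfrac{R}{12}+c_i$ with $C_1+C_2\ge 0$, is equivalent after using $\sum_i c_i=0$ to $c_3\le R/6$, i.e. $R\ge 6c_3$.

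First I would dispose of the trivial case $R=0$: then $c_3\le 0$, which together with $c_1\le c_2\le c_3$ and $\sum_i c_i=0$ forces $c_1=c_2=c_3=0$, so both sides vanish; this is the first equality case. Hence assume $R>0$ and divide by $R$, reducing the claim to $\tfrac{R}{4}\sum_{i=1}^3 c_i^2\ge 3\sum_{i=1}^3 c_i^3$. Next I would invoke the cubic identity \eqref{s2l2.7e2}, which for a traceless triple gives $\sum_{i=1}^3 c_i^3=3c_1c_2c_3$, so the goal becomes
\[
\frac{R}{4}\sum_{i=1}^3 c_i^2\ \ge\ 9\,c_1c_2c_3.
\]
Now comes the one step that requires a small idea: the left-hand side carries no sign information beyond $R\ge 0$, so I would trade the factor $R$ using $R\ge 6c_3$ and $c_3\ge 0$ to get $\tfrac{R}{4}\sum_i c_i^2\ge \tfrac{3c_3}{2}\sum_i c_i^2$. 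Finally, eliminating $c_3=-(c_1+c_2)$ and expanding gives $\sum_i c_i^2=2(c_1^2+c_1c_2+c_2^2)$, whence
\[
\frac{3c_3}{2}\sum_{i=1}^3 c_i^2-9\,c_1c_2c_3=\frac{3c_3}{2}\Big(\sum_{i=1}^3 c_i^2-6c_1c_2\Big)=3\,c_3\,(c_1-c_2)^2\ \ge\ 0,
\]
which proves part (1).

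For part (2), I would trace the equalities above in the case $R>0$: if $\sum_i c_i^2=0$ we are back to $c_i\equiv 0$; otherwise equality in $R\ge 6c_3$ forces $R=6c_3$ (so $c_3>0$), and equality in the last display forces $c_1=c_2$, which together with $c_1+c_2+c_3=0$ yields $c_1=c_2=-R/12$, $c_3=R/6$; a direct substitution confirms both configurations attain equality. I do not anticipate a genuine obstacle here: the only non-mechanical move is recognizing the substitution $R\mapsto 6c_3$ that converts the problem into the manifest square $3c_3(c_1-c_2)^2$, and everything else is bookkeeping — with the mild care that the eigenvalue ordering and the two structural constraints ($\sum_i c_i=0$ and $R\ge 6c_3$) are used in the right places, and that both equality branches (all $c_i$ zero, versus the $\CP^2$-type branch $c_3=R/6$) are retained.
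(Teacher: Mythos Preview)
Your argument is correct, and it follows a genuinely different route from the paper's own proof. The paper treats the inequality as a constrained optimization problem: it applies Lagrange multipliers to $G(c_1,c_2,c_3)=R\sum c_i^2-12\sum c_i^3$ on the hyperplane $\sum c_i=0$, finds that interior critical points have at most two distinct coordinate values, evaluates $G$ there, and then checks the boundary face $c_3=R/6$ by substituting and completing the square in $c_1$. Your proof instead stays purely algebraic: you use the Newton-type identity \eqref{s2l2.7e2} to rewrite $\sum c_i^3=3c_1c_2c_3$, then make the single substitution $R\ge 6c_3$ to reduce the whole question to the nonnegativity of $3c_3(c_1-c_2)^2$. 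This is shorter and more transparent, and it makes the equality analysis immediate --- one simply reads off $R=6c_3$ and $c_1=c_2$ from the two places where slack can occur. The paper's Lagrange-multiplier method, on the other hand, is more systematic and would adapt more readily to variants of the problem where no such neat factorization presents itself. Both proofs rely on exactly the same three structural inputs ($\sum c_i=0$, $R\ge 0$, $c_3\le R/6$), so neither is ``stronger'' in content; yours is just more direct for this particular inequality.
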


\begin{proof}
	We first note that we have the constraints $\sum_{1}^{3}c_i=0$ (as the anti-self dual Weyl curvature $W^-$ is trace free) and $C_1+C_2=\frac{R}{6}+c_1+c_2\geq 0$ (due to the half nonnegative isotropic assumption). It is also easy to see that the second constraint is equivalent to $\frac{R}{6}\geq c_i$, for $1\leq i \leq 3$.
	
Now we define the objective function 
	$$ G(c_1,c_2,c_3)=R\sum_{1}^{3}c_i^2-12\sum_{1}^{3}c_i^3. $$
By using the method of Lagrange multipliers, one finds that the critical points of $G$ satisfy the following equation for some constant $\lambda$:
	$$ \la 2c_1R-36c_1^2, 2c_2R-36c_2^2, 2c_3R-36c_3^2 \ra = \lambda \la 1,1,1 \ra.$$
	Thus, $c_1,c_2,c_3$ are solutions of the quadratic equation $36x^2-2Rx+\lambda = 0.$ By the quadratic formula, we have
	$$ x_{\pm} = \frac{2R\pm \sqrt{4R^2-144\lambda}}{72}. $$
	
	On one hand, if $c_1=c_2=c_3=x_{\pm}$, then $ 0=c_1+c_2+c_3=3\cdot x_{\pm} $, which implies that $c_1=c_2=c_3=0$, hence $G(c_1,c_2,c_3)=0$. 

On the other hand, if $c_1=c_2=x_{-}$ and $c_3=x_+$, then $ 0=c_1+c_2+c_3=2\cdot x_{-} + x_+ $,  implying $ \sqrt{4R^2-144\lambda}=6R $. It then follows that $c_1=c_2=-\frac{R}{18}$, $c_3=\frac{R}{9}$ and  $G(c_1,c_2,c_3)=\frac{R^3}{162} \geq 0$. 

Finally, for the boundary case, we may assume $c_3=\frac{R}{6}$, then $0=c_1+c_2+c_3$ and  $c_2=-\frac{R}{6}-c_1$. Hence,
	\begin{eqnarray*}
		&& G(c_1,c_2,c_3) \\
		&=& R\left( c_1^2 + \left(-\frac{R}{6}-c_1 \right)^2 + \left( \frac{R}{6}\right) ^2 \right) -12 \left( c_1^3 + \left(-\frac{R}{6}-c_1 \right)^3 + \left( \frac{R}{6}\right) ^3 \right)   \\
		&=& 8R\left( c_1+\frac{R}{12}\right)^2 \\
		&\geq& 0,
	\end{eqnarray*}
	with equality if and only if $c_1=c_2=-\frac{R}{12}$ and $c_3=\frac{R}{6}$. 

This finishes the proof of Lemma \ref{lem:algebraicidentityofRanda}.
\end{proof}

\begin{rmk}
	Lemma \ref{lem:inequalityof|C|/R_1} and 
Lemma \ref{lem:algebraicidentityofRanda} also hold for gradient steady and expanding Ricci solitons.
\end{rmk}

\bigskip
\noindent {\bf Conclusion of the Proof of Theorem \ref{thm:main}.}
By Remark 2.2, it suffices to assume $R>0$ everywhere so Lemma \ref{lem:inequalityof|C|/R_1}  applies.  
 
Since the Ricci tensor $Rc$ has an eigenvalue with multiplicity $3$ by assumption, from (\ref{matrixB}) we know that either $B^tB = 0$, or $B^tB = b^2\ \!\text{Id}$ for some constant $b$. In either case, we have
$$ R\sum_{i=1}^{3}c_i\tilde{b}_i^2 = 0. $$
Thus, it follows from Lemma \ref{lem:inequalityof|C|/R_1} and Lemma \ref{lem:algebraicidentityofRanda} that $\Delta_F(|C|R^{-1}) \geq 0$. 

Now, we are going to apply the Yau-Naber Liouville maximum principle (Lemma \ref{lem:Yau-NaberLiouville}) with $u=|C|R^{-1}$ and $h=F$ to get a pinching estimate on the anti-self-dual Weyl curvature $W^{-}$. On one hand, by Lemma \ref{lem:M-S} (or Lemma 2.2 and Lemma 2.5), we have
\begin{equation} \label{F-vol}
\int_M e^{-F} = \int_M R^2e^{-f} < \infty. 
\end{equation}
On the other hand, we note that half nonnegative isotropic curvature implies
\begin{equation} \label{boundfor Ci}
 -\frac{R}{4} \leq C_1 \leq C_2 \leq C_3 \leq \frac{R}{4}. 
\end{equation} 
Thus $|C|^2 \leq \frac{3}{16}R^2$, from which we get
$$ \int_M \frac{|C|^2}{R^2}e^{-F} = \int_M |C|^2 e^{-f} \leq \frac{3}{16} \int_M R^2 e^{-f} < \infty. $$
Therefore, by applying Lemma \ref{lem:Yau-NaberLiouville}, we conclude that $|C|/R$ is a constant.

Now, using $|C|R^{-1} \equiv \text{constant}$ and Lemma \ref{lem:inequalityof|C|/R_1}, it follows that
$$  \frac{1}{4}R^2\sum_{i=1}^{3}c_i^2 - 3R\sum_{i=1}^{3}c_i^3 + 4\left( \sum_{i=1}^{3}c_i^2 \right)\left( \sum_{i=1}^{3}\tilde{b}_i^2 \right) = 0.  $$
By Lemma \ref{lem:algebraicidentityofRanda} and the equation above, we see that  either $c_i = 0$ for $i=1,2,3$, or $\tilde{b}_i=0$ for  $i=1,2,3$ and $c_3=\frac{R}{6}$, $c_1=c_2=-\frac{R}{12}$. 

Recall that $c_1\le c_2\le c_3$ are the eigenvalues of $W^-$. Hence, in the first case when $c_i = 0$ ($1\leq i\leq 3$), $(M^4, g)$ is half locally conformally flat. Thus, by the work of \cite{Chen-Wang:15} or \cite{Cao-Chen:13}, $(M^4, g, f)$ is either $\rS^4$, or $\CP^2$, or a finite quotient of $\rS^3\times \R$. In the second case, in view of the condition $\tilde{b}_i=0$ ($1\le i\le3$) and (\ref{s2l2.7e1}), $(M^4, g)$ is Einstein with half nonnegative isotropic curvature. Also, since $c_1=c_2=-\frac{R}{12}$, it is not half PIC. Then, applying the classification results of Richard-Sechadri \cite{Richard-Seshadri:16} (see also \cite{Wu:17}), we  conclude that $(M^4, g,f)$ is either the Gaussian soliton on $\R^4$ or K\"ahler Einstein with nonnegative isotropic curvature (but not half PIC). By Corollary \ref{thm:kahler}, the latter must be $\CP^1\times \CP^1$. In particular, if $(M^4,g)$ has half PIC, then the second case is excluded. Therefore, we have completed the proof of Theorem \ref{thm:main}.

\hfill $\Box$

\begin{rmk} 
Alternatively, one can apply the Yau-Naber Liouville maximum principle to the quantity $(C_3-C_1){R}^{-1}$ to get a slightly different proof of Theorem \ref{thm:main} given below.  
\end{rmk}

\begin{proof}
	By direct computations, for $R>0$ and $F = f - 2\log R$, we have
	$$ \Delta_F \frac{C}{R} = \frac{2}{R^2}[ C|Rc|^2 - R(C^2 + B^tB + 2C^{\sharp}) ].  $$
	Hence,
	$$ \Delta_F \frac{C_1}{R} \leq \frac{2}{R^2}[ C_1|Rc|^2 - R(C_1^2 + B_1^2 + 2C_2C_3) ],  $$
	and
	$$ \Delta_F \frac{C_3}{R} \geq \frac{2}{R^2}[ C_3|Rc|^2 - R(C_3^2 + B_3^2 + 2C_1C_2) ].  $$
	Then, it follows that 

	\begin{eqnarray*}
		\Delta_F \frac{C_3-C_1}{R} &\geq& \frac{2}{R^2} [ (C_3-C_1)|Rc|^2  + R(C_1^2 + B_1^2 + 2C_2C_3 - C_3^2 - B_3^2 - 2C_1C_2) ] \\
		&=& \frac{2}{R^2} \left[ (C_3-C_1)\left( |\mathring{Rc}|^2 + \frac{1}{4}R^2\right)  + R(C_1^2 - C_3^2) \right] \\
		&& + \frac{2}{R^2} \left[ R(B_1^2 -B_3^2) + 2RC_2(C_3-C_1) \right] \\
		&=& \frac{2}{R^2} \left[ (C_3-C_1) |\mathring{Rc}|^2 + R(B_1^2 -B_3^2) \right] \\
		&& + \frac{2}{R^2} \left[ R(C_3-C_1)\left(  \frac{1}{4}R - (C_3+C_1) + 2C_2 \right) \right] \\
		&=& \frac{2}{R^2} \left[ (C_3-C_1) |\mathring{Rc}|^2 + R(B_1^2 -B_3^2) + 3R(C_3-C_1)C_2 \right].
	\end{eqnarray*}
	
	Under the assumption that the Ricci tensor has an eigenvalue with multiplicity 3, from (\ref{matrixB}) we know that either $B^tB = 0$ or $B^tB = b^2\!\ \text{Id}$ for some constant $b$. In any case, we have
	$$ B_1=B_2=B_3. $$
	Therefore, as half nonnegative isotropic curvature implies $C_2\geq 0$, we obtain
	\begin{eqnarray} \label{eq:C3minusC1}
		\Delta_F \frac{C_3-C_1}{R} = \frac{2}{R^2} \left[ (C_3-C_1) |\mathring{Rc}|^2 + 3R(C_3-C_1)C_2 \right] \geq 0.
	\end{eqnarray}
	On the other hand, from (\ref{boundfor Ci}) we get
	$$ \int_M \frac{|C_3-C_1|^2}{R^2}e^{-F} = \int_M |C_3-C_1|^2 e^{-f} \leq \frac{1}{4} \int_M R^2 e^{-f} < \infty. $$
	Hence, by (\ref{F-vol}) and applying the Yau-Naber Liouville theorem again, we conclude that $(C_3-C_1)R^{-1}$ is a constant.
	
	Now, by the fact that $(C_3-C_1)R^{-1} \equiv \text{constant}$ and (\ref{eq:C3minusC1}), we have
	$$ (C_3-C_1) |\mathring{Rc}|^2 + 3R(C_3-C_1)C_2 = 0, $$
	which implies that either $C_3=C_1$, or $\mathring{Rc}=0$ and $C_2=0$. In the first case, when $C_3=C_1$, it follows that $W^{-}\equiv 0$ and $(M^4, g)$ is half locally conformally flat. In the second case, $(M^4, g)$ is Einstein with half nonnegative isotropic curvature. 
	So Theorem \ref{thm:main}  follows as before. 
\end{proof}

\bigskip

\end{document}